\NewDocumentCommand{\rvect}{m}
 {
  \seq_set_split:Nnn \l_tmpa_seq { , } { #1 }
  \begin{bmatrix}
  \seq_use:Nn \l_tmpa_seq { & }
  \end{bmatrix}
 }
\newtheorem{definition}{Definition}
\newtheorem{theorem}{Theorem}
\newtheorem{proposition}{Proposition}
\newtheorem{corollary}{Corollary}
\declaretheorem[style=definition,qed=$\triangle$]{example}
\theoremstyle{definition}
\def\boxitat#1#2{\vbox             
     {\hrule\hbox{\vrule\kern#1%
     \vbox{\kern #1\hbox{#2}\kern#1}\kern#1\vrule}\hrule}}
\def\enclose#1{\boxitat{0pt}{#1}}
\def\qedspace{\null}
\def\qedblack{\qedspace                    
  \lower.6pt\hbox{\vrule height7pt width 5pt}}
\def\qedwhite{\qedspace                    
  \lower.6pt\enclose{%
        \hbox{\vrule height7pt width 0pt\hskip6pt}}}
\def\beq{\begin{equation}}
\def\eeq{\end{equation}}
\newcommand{\disjun}{\mathbin{\dot{\cup}}}
\def\eps{{\varepsilon}}
\def\si{{\sigma}}
\def\om{{\omega}}
\def\sW{{\cal W}}
\def\sR{{\cal R}}
\def\sT{{\cal F}}
\def\hC{{\hat C}}
\def\bx{{\bf x}}
\def\bu{{\bf u}}
\def\bv{{\bf v}}
\def\bz{{\bf z}}
\def\bn{{\bf n}}
\def\bj{{\bf j}}
\def\bs{{\bf s}}
\def\bg{{\bf g}}
\def\bh{{\bf h}}
\def\fq{{\mathbb F}_q}
\def\fp{{\mathbb F}_p}
\def\Zk{{{\mathbb Z}_k}}
\def\Zp{{{\mathbb Z}_p}}
\def\mZ{{\mathbb Z}}
\def\term{\mbox{\boldmath $\beta$}}
\def\start{\mbox{\boldmath $\alpha$}}
\def\sR{{\cal R}}
\def\sT{{\cal T}}
\def\SEQ{\textsc{Seq}}
\def\spn{{\sigma}}
\begin{document}

\title{Counting  compositions over finite abelian groups}

\author{
  Zhicheng Gao\thanks{
    Email: \texttt{zgao@math.carleton.ca}. This work is supported in part by
    NSERC.},\,
  Andrew MacFie\thanks{
    Email: \texttt{andrewmacfie@carleton.ca}.},\,
  Qiang Wang\thanks{
    Email: \texttt{wang@math.carleton.ca}.
    This work is supported in part by NSERC.}\\
\small School of Mathematics and Statistics\\[-0.8ex]
\small Carleton University\\[-0.8ex]
\small Ottawa, Ontario, Canada K1S5B6\\
}


\maketitle

\smallskip

\centerline{\small Mathematics Subject Classification: 05A15, 05A16}

\begin{abstract}
We find the number of compositions over finite abelian groups under two types
of restrictions: (i) each part belongs to a given subset and (ii) small
runs of consecutive parts must have given properties.
Waring's problem over finite fields can be converted to type~(i) compositions,
whereas Carlitz and \enquote{locally Mullen} compositions can be formulated as
type~(ii) compositions.
We use the {multisection formula} to translate the problem from
integers to group elements, the transfer matrix method to do exact counting,
and finally the Perron-Frobenius theorem to derive asymptotics.
We also exhibit bijections involving certain restricted classes of
compositions.

\smallskip
{\bf Keywords:} Integer composition, finite abelian group, transfer matrix,
  enumeration.
\end{abstract}

\section{Introduction}

Let $n$ and $m$ be positive integers.
A \emph{composition} of $n$ is a sequence of positive integers whose sum
is $n$.
An \emph{$m$-composition} is a composition consisting of $m$ terms, also called
parts.
It is well known that there is a bijection between $m$-compositions of $n$
and $(m-1)$-subsets of $\{1, 2, \ldots, n-1\}$ and thus there are
$\binom{n-1}{m-1}$ $m$-compositions of $n$ and $2^{n-1}$ compositions of $n$.
A \emph{weak composition} is the same as a composition except terms equal to
$0$ are allowed.
Using substitution of variables, we can easily see that the number of weak
$m$-compositions of $n$ is equal to the number of $m$-compositions of $n
+ m$, which is $\binom{n+m-1}{m-1} = \binom{n+m-1}{n}$.

Problems on or related to compositions have been studied over algebraic
structures beyond the integers.
Let $\fq$ be a finite field of $q$ elements and let $\fq^*=\fq\setminus \{0\}$.
Li and Wan \cite{LiWan, LiWan:12} estimated the number
$$N(m, s, S) = \# \{ \{x_1, x_2, \ldots, x_m\}
\subseteq  S \mid  x_1 + x_2  + \cdots + x_m  = s \}$$
of $m$-subsets of $S
\subseteq \fq$ whose sum is $s \in \fq$.
In particular, exact formulas are obtained in cases where $S= \fq$ or $\fq^*$
or $\fq \setminus \{0, 1\}$. A shorter proof was given by Kosters
\cite{Kosters:13}  using character theory.
Motivated by the study by Li and Wan and the study of polynomials of prescribed
ranges over finite fields \cite{MuWa,gacsetal}, Muratovi\'{c} and Wang proposed to
study
$$c(m, s, S) = \# \{ (x_1, x_2, \ldots, x_m) \in S \times S \times \cdots
\times S \mid  x_1 + x_2  + \cdots + x_m = s \},$$
that is, the number of
ordered $m$-tuples  whose sum is $s$ and where each coordinate belongs to $S
\subseteq \fq$, as well as the number $M(m, s, S)$ which counts the number of
$m$-multisets over $S \subseteq \fq$ whose sum is $s \in \fq$.
In particular, when $S = \fq^*$,
this essentially gives the concept of
compositions and partitions over a finite field, respectively.

A \emph{partition} of $s\in \fq $ into $m$ parts is a multiset of $m$ elements
of $\fq^*$ whose sum is $s$.
The $m$ nonzero elements are refered to as the parts of the partition.
In \cite{MurWan2}, Muratovi\'{c} and Wang  obtain an exact formula for $M(m, s,
\fq^*)$,  the number of partitions of an element $s$ into $m$ parts over finite
field $\fq$.

A \emph{composition} of $s\in \fq $ with $m$ parts is a solution
$(x_1, x_2, \ldots, x_m)$  to the equation
\begin{equation} \label{eqCompo}
s=x_1+x_2+\cdots +x_m,
\end{equation}
with each $x_i \in \fq^*$. Similarly, a \emph{weak composition} of $w\in \fq $
with $m$ parts is
a solution $(x_1, x_2, \ldots, x_m)$ to Equation~(\ref{eqCompo}) with each $x_i \in \fq$.
We denote the number of compositions of $s$ having $m$ parts by
$c(m, s, \fq^*)$.
The number of weak compositions of $s$ with $m$ parts is denoted by $c(m, s,
\fq)$.
A formula for $c(m,s,\mathbb{F}_p)$ can be found in \cite[p.~295]{BEW}.
A general formula for $c(m, s, \fq^*)$ for arbitrary $q$  and nonzero $s$ can
be obtained using a remark on the normalized Jacobi sum of the trivial
character given in \cite{Conrad} (see Remark~1 on page 144).
A recurrence relation for $c(m,s,\fq)$ is given in \cite{MurWan}.

Counting compositions over finite fields where parts are restricted to
a subset is conceptually related to Waring's problem and solutions to diagonal
equations.
In number theory, Waring's problem asks whether each natural number $k$ has an
associated positive integer $m$ such that every natural number is the sum of at
most $m$ natural numbers to the power of $k$.
The problem was originally posed in 1770 and answered in the affirmative for
integers by Hilbert in 1909.
Since then, there has been a good deal of research on estimating the Waring's
number $g(k)$ for every $k$, which denotes the minimum number $m$ of $k$th
powers of naturals needed to represent all positive integers.

Over finite prime fields $\fp$, Waring's number $g(k, p)$ is  the smallest
number $m$ such that for all $a \in \fp$, the equation
\begin{equation} \label{eq:waring}
x_1^k + x_2^k + \cdots + x_m^k \equiv a \pmod{p}
\end{equation}
has a solution in integers $x_i$ (see for example~\cite{Cochrane}).
Waring's problem over finite fields is to estimate $g(k, p)$ and, when
possible, evaluate it.
This is equivalent to finding the smallest number $m$ such that
\begin{equation} \label{eq:waring}
x_1 + x_2 + \cdots + x_m \equiv a \pmod{p}
\end{equation}
has a solution such that $x_i \in S$ where $S$ denotes the subset of $\fp$
consisting of all $k$\textsuperscript{th} powers in $\fp$.
Essentially we need to find the number of solutions $(x_1, x_2, \ldots, x_m)$
in $S\times S \times \ldots \times S$ and pick the minimum $m$ so that this
number is always positive for any $a\in \fp$.
However, this is a computationally difficult task in general.

There is also extensive study on the number of solutions to \enquote{diagonal}
equations of the type
\begin{equation}\label{diagonal}
a_1 x_1^{d_1} + \cdots + a_m x_m^{d_m} = a,
\end{equation}
where $a_1, \ldots, a_m\in \fp^*$, $a\in \fp$ and $d_1, \ldots, d_n$ are
positive integers.
The pioneering work has been done by Weil \cite{Weil49}, who
expressed the number of solutions in terms of Jacobi sums.
Explicit
formulas for the number of solutions for certain choices of $a_1, \ldots, a_m,
a$, and exponents $d_1, \ldots, d_m$ can be deduced from Weil's expression;
one may consult
\cite{Baoulina10, Baoulina16,CaoSun07, Sun97, SunYuan96,Wolfmann92} and the
references therein for details.
For diagonal equations, the problem can be again viewed as a composition
problem over $\fp$, such that each part is restricted to lie in a coset of a
multiplicative subgroup.

Using the fact that the additive group $(\fq,+)$ is isomorphic to the
additive group $(\mathbb{F}_p^r,+)$,
we obtain that the numbers of partitions and compositions of
elements over $\mathbb{F}_p^r$ are the same as the numbers of partitions
and compositions of corresponding elements over $\fq$.
More generally, the number of partitions over an arbitrary finite abelian group
is given in \cite{MurWan2}.
However, problems of enumeration of compositions over an arbitrary finite
abelian group are largely open.
We note that there have been some studies on compositions over integer tuples
\cite{Lou08,MPR09}, which are also called matrix compositions.

In the present work, we consider two general problems on restricted
compositions over finite abelian groups.
Let ${\mZ}_k$ denote the additive cyclic  group $\{0,1,2,\ldots, k-1\}$ and
${\mZ}^{*}_k:=\{1,2,\ldots,k-1\}$.
Since a finite abelian group $G$ is isomorphic to a direct sum $\bigoplus_{t=1}^r
\mZ_{k_t}$, in the following we use $G$ to denote such a direct sum.
Sometimes it is convenient to add (tuples of) elements of $\mZ_k$ as integers,
so when performing group addition in $G$ we explicitly speak of modular
addition.
In the rest of the paper, we also assume $|G|\ge 2$, and consequently $k_t\ge
2$ for each $1\le t\le r$.
We also use $\bf 0$ to denote the zero element of $G$ and
adopt the notation $G^*:=G\setminus \{{\bf 0}\}$.

\begin{definition}[Compositions over a finite abelian group]
An \emph{$m$-composition of $\bs\in G$ over $G$} is a solution to
$\bx_1+\cdots +\bx_m = {\bs}$, where each $\bx_i=(x_{i, 1}, \ldots, x_{i, r})$
is a nonzero element of $G$ and addition is taken component-wise and modulo
$k_t$ for $1\le t\le r$.
If we allow ${\bf 0}$ as a value for the $\bx_i$, then we speak of \emph{weak
compositions} over $G$.
\end{definition}

Firstly we are interested in finding the number of $m$-compositions over $G$
such that for $1 \leq j \leq m$, part $\bx_j$ is restricted to an abitrary
subset $S_j \subseteq G$.
Our contribution is Theorem~\ref{thm:Subset},
where we obtain an asymptotic formula for the number of such $m$-compositions
as $m\to \infty$.

Secondly, we enumerate $m$-compositions over a finite abelian group $G$ such
that any $d$ consecutive parts satisfy certain conditions for a given positive
integer $d$.
These are called locally restricted compositions.
Bender and Canfield \cite{BC09} studied integer compositions under general
local restrictions.
For example, Carlitz compositions are those in which adjacent parts are
distinct.
In a private communication, G.\ L.\ Mullen   \cite{mullen} suggested the following problem:
Let $p$ be a prime, and let $d$ and $m$ be positive integers with $d \leq m$.
Let $N_m^{(d)}$ denote the number of solutions to the congruence $x_1 + \cdots
+ x_m \equiv 1 \pmod{p}$ where each subsum $\sum  x_j$ of at most $d$ parts is nonzero (modulo $p$).
Find $N_m^{(d)}$ when $d\geq 2$. Motivated by Mullen's problem, we consider a
related problem which falls within the locally-restricted framework.  Namely,
we estimate the number of solutions to the congruence $x_1 + \cdots + x_m
\equiv 1 \pmod{p}$ where  each subsum $\sum_{j=i}^{i+d-1}  x_j$  ($1\leq i \leq
m-d+1$) of at most $d$ consecutive parts is nonzero.  We call these
compositions {\it locally $d$-Mullen compositions}.
Generating functions for general locally restricted compositions over finite
abelian groups are given in Proposition~\ref{prop:local}.
Under moderate conditions, the asymptotic number of locally restricted
compositions is given in Theorem~\ref{thm:local}. As consequences, we obtain
asymptotics for a few concrete composition problems including locally
$d$-Mullen compositions (see Corollary~\ref{cor:local}, 
Corollary~\ref{cor:list},  and Theorem~\ref{theorem:locallyMullen}).

We present these main results in Section~\ref{sec:results}.
Some auxiliary propositions are discussed in Section~\ref{sec:propositions} and
the proofs of the main results are given in Section~\ref{sec:proofs}.
In Section~\ref{sec:biject} we present bijections which help illuminate
locally $d$-Mullen compositions.
Finally we give conclusions in Section~\ref{sec:conclusion}.

\section{Main results} \label{sec:results}
Recall that $G$ denotes the set of integer tuples in the direct sum
$\bigoplus_{t=1}^r\mZ_{k_t}$.
In the following we denote the order of $G$ by $|G|:=\prod_{t=1}^rk_t$.

\begin{theorem}[Subset restriction]\label{thm:Subset}
For each $j \in \{1, 2, \ldots\}$, let $S_j$ be the Cartesian product
$\prod_{t=1}^rS_{t,j}$ where
$S_{t,j}$ is a given subset of $\mZ_{k_t}$.
Let $c(\bs;S_1,\ldots,S_m)$ be the
number of  $m$-compositions of $\bs$ over
$G$ such that the $j\textsuperscript{th}$ part lies in
$S_j$, for each $j \in \{1, \ldots, m\}$.
Assume that for $1\le t\le r$ and $j \geq 1$ we have
$$\gcd\{a-b:a,b\in S_{t,j}\}=1.$$
Then, as $m \to \infty$,
$$c(\bs;S_1,\ldots,S_m)=\frac{1}{|G|}\left(\prod_{j=1}^m|S_{j}|\right)
  (1+O(\theta^m)),$$
where $0<\theta<1$ is a constant independent of $m$.
\end{theorem}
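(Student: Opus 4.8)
The plan is to count by harmonic analysis on $G$, which is the multisection / roots-of-unity filter mentioned in the abstract. Write $c(\bs;S_1,\dots,S_m)=\sum_{\bx_1\in S_1}\cdots\sum_{\bx_m\in S_m}[\bx_1+\cdots+\bx_m=\bs]$, index the characters of $G=\bigoplus_{t=1}^r\mZ_{k_t}$ by $\bn=(n_1,\dots,n_r)$ via $\chi_{\bn}(\bx)=\prod_{t=1}^r\om_{k_t}^{\,n_t x_t}$ (with $\om_{k}=e^{2\pi i/k}$), and use orthogonality: $\tfrac1{|G|}\sum_{\bn}\chi_{\bn}(\bg)$ equals $1$ if $\bg=\mathbf 0$ and $0$ otherwise. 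Applying this to $\bg=\bx_1+\cdots+\bx_m-\bs$ and using that each $\chi_{\bn}$ is a homomorphism, I would obtain
$$c(\bs;S_1,\dots,S_m)=\frac{1}{|G|}\sum_{\bn}\overline{\chi_{\bn}(\bs)}\prod_{j=1}^m\widehat{S_j}(\bn),\qquad \widehat{S_j}(\bn):=\sum_{\bx\in S_j}\chi_{\bn}(\bx).$$
Because $S_j=\prod_t S_{t,j}$ is a Cartesian product, the inner sum factors as $\widehat{S_j}(\bn)=\prod_{t=1}^r\widehat{S_{t,j}}(n_t)$ with $\widehat{S_{t,j}}(n_t)=\sum_{x\in S_{t,j}}\om_{k_t}^{\,n_t x}$. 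The trivial character $\bn=\mathbf 0$ contributes exactly the claimed main term $\tfrac1{|G|}\prod_{j=1}^m|S_j|$, since $\widehat{S_{t,j}}(0)=|S_{t,j}|$, so everything reduces to bounding the nontrivial characters.

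The key estimate is that $|\widehat{S_{t,j}}(n_t)|<|S_{t,j}|$ whenever $n_t\neq 0$. By the triangle inequality, equality would force all the unit vectors $\om_{k_t}^{\,n_t x}$ to coincide as $x$ ranges over $S_{t,j}$, i.e. $n_t(a-b)\equiv 0\pmod{k_t}$ for all $a,b\in S_{t,j}$. Since the integer differences $a-b$ have gcd $1$ by hypothesis, they generate all of $\mZ_{k_t}$, which forces $n_t\equiv 0\pmod{k_t}$, a contradiction. (Note the gcd hypothesis also forces $|S_{t,j}|\ge 2$, so no degenerate factor occurs.) This already gives $|\widehat{S_j}(\bn)|<|S_j|$ for every $\bn\neq\mathbf 0$, but to conclude I need geometric decay that is uniform in $m$ and in the sequence of sets.

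This uniformity — the main obstacle, since the $S_j$ may vary with $j$ and there are unboundedly many of them — I would resolve by finiteness: each $\mZ_{k_t}$ has only finitely many subsets, so
$$\rho:=\max_{1\le t\le r}\ \max\Bigl\{\tfrac{|\widehat{S}(n)|}{|S|}: S\subseteq\mZ_{k_t},\ \gcd\{a-b:a,b\in S\}=1,\ 1\le n\le k_t-1\Bigr\}$$
is a maximum over a finite set and hence satisfies $\rho<1$. For any $\bn\neq\mathbf 0$ at least one coordinate $n_{t_0}\neq 0$, so $|\widehat{S_j}(\bn)|/|S_j|\le\rho$ for every $j$, whence $\bigl|\prod_{j=1}^m\widehat{S_j}(\bn)\bigr|\le\rho^m\prod_{j=1}^m|S_j|$. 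Summing over the $|G|-1$ nontrivial characters and using $|\overline{\chi_{\bn}(\bs)}|=1$ then yields
$$\Bigl|c(\bs;S_1,\dots,S_m)-\tfrac1{|G|}\prod_{j=1}^m|S_j|\Bigr|\le\frac{|G|-1}{|G|}\,\rho^m\prod_{j=1}^m|S_j|,$$
which is exactly the asserted estimate with $\theta=\rho$.
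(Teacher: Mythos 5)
Your proof is correct and takes essentially the same route as the paper's: your character-orthogonality identity is exactly the multisection formula (Proposition~\ref{prop:multisection} via Corollary~\ref{cor:integer}), the trivial character yields the main term $\tfrac{1}{|G|}\prod_{j=1}^m|S_j|$, and the uniform bound $\rho<1$ comes from maximizing over the finitely many admissible subsets of each $\mZ_{k_t}$ and the finitely many nontrivial roots of unity, precisely as in the paper's definition of $\theta$, with the same trick of bounding only the coordinate $t_0$ with $n_{t_0}\ne 0$ nontrivially. The only cosmetic difference is that you prove the strict inequality $\bigl|\sum_{x\in S_{t,j}}\om_{k_t}^{n_t x}\bigr|<|S_{t,j}|$ directly from the equality case of the triangle inequality and the gcd hypothesis, whereas the paper invokes its Proposition~\ref{prop:aper}.
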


Let \(\SEQ_{m}(G) := \left\{ (\bx_1, \bx_2, \ldots, \bx_m): \bx_j \in G
\right\} = G^m\) denote the class of all possible weak $m$-compositions over
$G$.
The \emph{size} of a composition, denoted $|(\bx_1, \bx_2, \ldots, \bx_m)|$, is
defined to be the sum $\sum_{j=1}^m \bx_j$, using component-wise integer
addition.
Let $\SEQ(G) = \cup_{m\geq 0} \SEQ_{m}(G)$.
A subclass $\mathcal{A} \subseteq \SEQ(G)$ is called \emph{locally restricted}
if any $d$ consecutive parts satisfy certain restrictions for a given positive
integer $d$.
General locally restricted integer compositions were first studied in
\cite{BC09} using local restriction functions.
They can also be defined using local restriction digraphs.
For the purpose of this paper, we use the following definition, which is
essentially from \cite{BG16}.

\begin{definition}[Locally restricted compositions associated with a digraph] \label{def3}
Let $\spn$ be a positive integer.
We use $\eps$ to denote the empty composition, and we use $\eps_s$ to denote
a distinguished copy of $\eps$.
Let $\sT=\SEQ_{0}(G) \cup \cdots \cup \SEQ_{\si-1}(G)$, and let
$\sR$ be a nonempty subset of $\SEQ_{\spn}(G)$.
Let $D$ be a digraph with vertex set $V(D)=\{\eps_s\} \disjun \sR \disjun \sT$.
Assume $D$ satisfies the following conditions.
\begin{enumerate}
\item There is at least one arc from $\eps_s$ to $\sR$ and at least one arc from
  $\sR$ to $\sT$.
\item There are no arcs (including loops) between vertices in
  $\{\eps_s\} \disjun \sT$.
\item The sub-digraph $D_{\sR}$ of $D$ induced by $\sR$ is strongly
  connected and contains at least two vertices.
\end{enumerate}
The vertices in $\sR$ are called \emph{recurrent} vertices.
Let $\sW$
denote the set of directed walks from $\eps_s$ to $\sT$, and let $\SEQ(G;D)$
denote the class of compositions obtained by concatenating vertices in $\sW$.
Then $\SEQ(G;D)$ is called the class of \emph{locally restricted compositions
over $G$ associated with $D$}.
We call $\spn$ the {\em span} of $\SEQ(G;D)$.
\end{definition}

Informally, we are defining $\SEQ(G;D)$ by specifying two sets $\sR$ and $\sT$
of ``building block'' compositions, and saying how they may be combined
sequentially to form the compositions of interest:
Two blocks are allowed to be joined if there is an arc between them in $D$.
The structure of the graph implies that sequences of vertices forming a walk
from $\eps_s$ to $\sT$ are made up of one or more elements of $\sR$ followed
by one element of $\sT$.
Defining combinatorial objects by walks in graphs is a standard technique; see
e.g.\ \cite[Sec.~4.7]{Stanley}.

\begin{example}[name=Nonzero adjacent sum over $\Zk$, label=exa:nonzero]
Take compositions over $\Zk$ such that the sum of two adjacent parts is
nonzero, modulo $k$.
We can represent these compositions using a local restriction digraph as
follows.
In the digraph $D$, set $\sR=\mathbb{Z}_k^* \subset \SEQ_{1}(\Zk)$, and
$\sT=\SEQ_0(\Zk)=\{\eps\}$.
Include an arc from $u$ to $v$ in $\sR$ if and only if $u+v\not\equiv 0$ (mod $k$).
In this case, we have locally restricted compositions over $\Zk$  with span
$\sigma=1$.
The digraph for weak compositions is defined similarly with $\sR=\Zk =
\SEQ_{1}(\Zk)$.
\end{example}

\begin{example}[name=Carlitz compositions over $\Zk$, label=exa:carlweak]
Recall that Carlitz compositions are those where adjacent parts are distinct.
A corresponding local restriction digraph $D$ has vertex sets
$\sR=\mathbb{Z}_k^* \subset \SEQ_{1}(\Zk)$
(or $\sR=\Zk = \SEQ_{1}(\Zk)$ for weak compositions), $\sT=\{\eps\}$, and there
is an arc from any vertex in $\sR$ to any different vertex in $\sR$. In this
case, we have  locally restricted compositions over $\Zk$  with span
$\sigma=1$.
\end{example}

We note that it is possible for two different digraphs $D_1$ and $D_2$ to
define equal families $\SEQ(G; D_1)$ and $\SEQ(G; D_2)$ of compositions over
$G$.
For example, Carlitz compositions can also be defined using a digraph with span
$\spn=2$ such that each recurrent vertex is a pair of distinct nonzero elements
of $G$.

\begin{example}[name=Locally Mullen compositions, label=exa:Mullen]
Recall that locally $d$-Mullen compositions over $G$ are those such that the
sum of at most $d$ consecutive terms is nonzero in $G$.
Example~\ref{exa:nonzero} gives locally 2-Mullen compositions.
A digraph $D$ with span $3$ for locally 3-Mullen compositions can be
defined as follows.
Let $\sR$ be the set of locally 3-Mullen compositions with 3 parts over $G$.
We join $\eps_s$ to every vertex in $\sR$, and
join a vertex $\bu\in \sR$ to a vertex $\bv\in \sR\cup \sT$ if the
concatenation $\bu\bv$ is a locally 3-Mullen composition.
\end{example}

\begin{theorem}[Local restriction]\label{thm:local}
Consider a class $\SEQ(G; D)$ of locally restricted compositions with span
$\spn$. Suppose the following are satisfied.
  \begin{enumerate}

    \item The greatest common divisor of the lengths of all directed cycles in
      $D_{\sR}$ is equal to $1$.
    \item For each $1\le t\le r$, there is a positive integer $\ell$,
      two recurrent vertices $\bu$ and $\bv$, and a nonempty set
      $\sW_{\sR}$ of directed walks in $D_{\sR}$ of length $\ell$ from $\bu$ to
      $\bv$ such that the following hold.
      \begin{enumerate}
        \item For each walk $W=\bu\bu_1\ldots\bu_{\ell-1}\bv$ in $\sW_{\sR}$
          and each $i\ne t$,
          we have $u_{i,j}=\phi_{i,j}$,
          where $u_{i,j}$ denotes  the $i\textsuperscript{th}$ coordinate of
          the size vector $|\bu_j|$   and $\phi_{i,j}$ is fixed over all walks
          in  $\sW_{\sR}$.
        \item We have $\gcd\{m-n:m,n\in N\}=1$, where $N=\{n:n=u_{t,1}+\cdots+
          u_{t,\ell-1} \hbox{ for some walk
          $\bu\bu_1\ldots\bu_{\ell-1}\bv\in \sW_{\sR}$} \}$.
      \end{enumerate}
  \end{enumerate}
  Let $m=a\spn +b$ for some integers $a,b$ with $a\ge 1$ and $0\le b<\si$, and let $c_m(\bs,D)$ be the
  number of $m$-compositions of
  $\bs$ in $\SEQ(G; D)$.
  Fix $b$ but allow $a$ to vary.
  Suppose further that there is at least one arc from $\sR$ to $\SEQ_b(G)$.
  Then there are constants $A>0, B>1$ and $0<\theta<1$ (that is, independent of
  $m$ but perhaps depending on $b$) such that
  $$c_m(\bs,D)=A\cdot B^{m}\left(1+O\left(\theta^{m}\right)\right),
  \qquad m=a\spn+b,~m\to\infty.$$
\end{theorem}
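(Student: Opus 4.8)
The plan is to combine the transfer matrix method with multisection over the character group of $G$, and then read off the asymptotics from the Perron--Frobenius eigenvalue of the governing matrix. First I would associate, to each character $\chi=(\chi_1,\dots,\chi_r)$ of $G$, a weighted transfer matrix $M_\chi$ indexed by the recurrent set $\sR$: the $(\bu,\bv)$ entry is $\chi(|\bv|)$ when $D_{\sR}$ has an arc $\bu\to\bv$ and $0$ otherwise. A composition in $\SEQ(G;D)$ with $m=a\spn+b$ parts is a walk using $a$ recurrent blocks followed by one terminal block in $\SEQ_b(G)$, so the $\chi$-weighted count of all such compositions is $\mathbf{p}_\chi^{\top}M_\chi^{\,a-1}\mathbf{q}_\chi$, where $\mathbf{p}_\chi$ records the arcs out of $\eps_s$ and $\mathbf{q}_\chi$ records the weighted arcs from $\sR$ into $\SEQ_b(G)$. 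Orthogonality of characters (the multisection formula) then gives
$$c_m(\bs,D)=\frac{1}{|G|}\sum_{\chi}\overline{\chi(\bs)}\,\mathbf{p}_\chi^{\top}\,M_\chi^{\,a-1}\,\mathbf{q}_\chi .$$

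For the trivial character $\mathbf{1}$, the matrix $M_{\mathbf{1}}$ is the $0/1$ adjacency matrix of $D_{\sR}$, which is irreducible because $D_{\sR}$ is strongly connected (Definition~\ref{def3}) and primitive because hypothesis~(1) says the gcd of its cycle lengths equals $1$. Perron--Frobenius therefore supplies a simple dominant eigenvalue $\lambda>0$, strictly larger in modulus than every other eigenvalue, with strictly positive left and right eigenvectors. Since $D_{\sR}$ is strongly connected on at least two vertices and primitive, it is not a single directed cycle, so $\lambda>1$. The hypothesis that there is an arc from $\sR$ to $\SEQ_b(G)$, together with the arc from $\eps_s$ into $\sR$, makes $\mathbf{p}_{\mathbf{1}}$ and $\mathbf{q}_{\mathbf{1}}$ nonzero nonnegative vectors, so the $\mathbf{1}$-term contributes the main asymptotic $A\,B^{m}$ with $B=\lambda^{1/\spn}>1$ and a positive constant $A$ collecting $1/|G|$, the power $\lambda^{-1-b/\spn}$, and the (strictly positive) eigenvector pairings.

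The crux is to show that every nontrivial character contributes only to the error term, i.e.\ $\rho(M_\chi)<\lambda$ for $\chi\ne\mathbf{1}$, and this is exactly what hypothesis~(2) is designed to force. Since $|\chi(|\bv|)|=1$ on the arcs, $M_\chi$ satisfies $|M_\chi|=M_{\mathbf{1}}$ entrywise, so Wielandt's theorem for primitive matrices gives $\rho(M_\chi)\le\lambda$, with equality only if $M_\chi=e^{i\psi}\,D\,M_{\mathbf{1}}\,D^{-1}$ for some phase $e^{i\psi}$ and diagonal unitary $D$; entrywise this reads $\chi(|\bv|)=e^{i\psi}d_\bu d_\bv^{-1}$ on each arc $\bu\to\bv$. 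I expect this equality analysis to be the main obstacle. To rule it out, fix $t$ and take the family $\sW_{\sR}$ of length-$\ell$ walks from $\bu$ to $\bv$ provided by hypothesis~(2). Multiplying the entrywise relation along a walk $W=\bu\bu_1\cdots\bu_{\ell-1}\bv$ telescopes the diagonal factors, giving $\chi\!\left(\textstyle\sum_{j}|\bu_j|+|\bv|\right)=e^{i\ell\psi}d_\bu d_\bv^{-1}$, whose right-hand side is the same for every $W\in\sW_{\sR}$. Comparing two walks cancels that right-hand side, so $\chi$ is trivial on the difference of their sizes; by (2a) this difference is supported on coordinate $t$, with $t$-th coordinate ranging over $N-N$, and by (2b) the integers $N-N$ have gcd $1$ and hence generate $\mZ_{k_t}$, forcing $\chi_t$ to be trivial. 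Running over all $t$ yields $\chi=\mathbf{1}$, a contradiction, so $\rho(M_\chi)<\lambda$ for all $\chi\ne\mathbf{1}$.

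Finally I would assemble the estimate. The subdominant eigenvalue of $M_{\mathbf{1}}$ has modulus $\lambda_2<\lambda$ by primitivity, and by the previous paragraph $\max_{\chi\ne\mathbf{1}}\rho(M_\chi)<\lambda$; taking $\theta_0$ to be the largest of these ratios to $\lambda$ gives $\theta_0<1$. Expanding each $M_\chi^{\,a-1}$ spectrally, isolating the $\lambda$-term of $M_{\mathbf{1}}$ as the main term, and absorbing the polynomial factors from Jordan blocks by enlarging the ratio slightly, all remaining contributions are $O(\lambda^{a}\theta_0^{a})$ uniformly over the finitely many characters. Substituting $a=(m-b)/\spn$ turns $\lambda^{a}$ into $B^{m}$ and $\theta_0^{a}$ into $\theta^{m}$ with $\theta=\theta_0^{1/\spn}<1$, yielding $c_m(\bs,D)=A\,B^{m}\bigl(1+O(\theta^{m})\bigr)$. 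The Perron--Frobenius and multisection steps are routine once the matrices are in place; the real work is the Wielandt equality analysis and its reduction, through hypothesis~(2), to the gcd condition that forces each $\chi_t$ trivial.
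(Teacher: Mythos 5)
Your proof is correct, and its overall architecture --- character multisection, the matrices $M_\chi$ (which are precisely the paper's transfer matrix $T(\bz)$ evaluated at $z_t=\om_{k_t}^{j_t}$), and Perron--Frobenius giving the main term from the trivial character together with the spectral gap $\theta_1$ --- is the same as the paper's. Where you genuinely diverge is at the crux, the strict subdominance $\rho(M_\chi)<\lambda$ for $\chi\ne\mathbf{1}$. The paper proves this by a direct entrywise estimate: it splits the length-$\ell$ walks from $\bu$ to $\bv$ into $\sW_{\sR}$ and its complement, uses hypothesis (2) together with its scalar aperiodicity lemma (Proposition~\ref{prop:aper}) to get $\left|\sum_{W\in\sW_{\sR}}\om_{k_t}^{j_t\sum_h u_{t,h}}\right|<|\sW_{\sR}|$, hence the strict entrywise inequality $\left|\bigl(T^{\ell}(\om_{k_1}^{j_1},\ldots,\om_{k_r}^{j_r})\bigr)_{\bu,\bv}\right|<\bigl(T^{\ell}(\mathbf{1})\bigr)_{\bu,\bv}$, and then applies item~3 of Proposition~\ref{prop:eigen} to the power $T^{\ell}$ (which is irreducible because hypothesis (1) makes $T(\mathbf{1})$ primitive) to conclude $\rho\bigl(T(\om_{k_1}^{j_1},\ldots,\om_{k_r}^{j_r})\bigr)^{\ell}<\rho^{\ell}$. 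You instead invoke the equality case of Wielandt's theorem on $M_\chi$ itself and show equality is impossible: the diagonal similarity plus phase telescopes along walks, so all walks in $\sW_{\sR}$ would carry the same $\chi$-weight, and then (2a) plus the gcd condition (2b) force every component $\chi_t$ to be trivial, a contradiction. The two mechanisms are equivalent in strength, and your reduction of (2a)/(2b) to triviality of $\chi_t$ is sound (the differences $N-N$ having gcd $1$ indeed generate $\mZ_{k_t}$). What each buys: your route works with $M_\chi$ directly --- no passage to matrix powers, where irreducibility has to be re-secured via primitivity --- and rests on a single standard citation, at the cost of needing the less elementary equality half of Wielandt's theorem; the paper's route is more self-contained, using only the inequality side of Perron--Frobenius theory plus a scalar polynomial lemma, which is why it carries Proposition~\ref{prop:aper} as separate machinery. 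One further point in your favor: you actually justify $B=\lambda^{1/\spn}>1$ (primitivity plus at least two recurrent vertices rules out a single directed cycle, so the $0$--$1$ matrix $M_{\mathbf{1}}$ is not a permutation matrix), a claim the paper's statement makes but its proof never addresses.
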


\noindent {\bf Remark.} Assumption~2 in Theorem~2 is a technical condition
which allows our proofs to work.
It might be possible to relax this technical assumption.

\begin{corollary}\label{cor:local}
Assume the notation and conditions of Theorem~2. Let $H$ be the outdegree of $\eps_s$.
Suppose further that there are constants  $J$ and $K$ such that
\begin{enumerate}
  \item every recurrent vertex has outdegree $K$ in $D_{\sR}$, and
  \item every recurrent vertex is joined to $J$ vertices in $\SEQ_b(G)$.
\end{enumerate}
  Then Theorem~\ref{thm:local} holds with
\begin{equation}\label{cor1}
A=\frac{H\,J}{|G|K^{1+b/\spn}},~B=K^{1/\spn}.
\end{equation}
\end{corollary}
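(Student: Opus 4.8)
The plan is to specialize the transfer-matrix computation underlying Theorem~\ref{thm:local} and read off the two constants directly from the regularity hypotheses. Recall that in that proof an $m$-composition in $\SEQ(G;D)$ with $m=a\spn+b$ corresponds to a directed walk $\eps_s\,\bu_1\bu_2\cdots\bu_a\,t$ in which $\bu_1,\ldots,\bu_a\in\sR$ supply $a\spn$ parts and the terminal vertex $t\in\SEQ_b(G)$ supplies the remaining $b$ parts. Let $M$ be the transfer matrix of $D_\sR$, indexed by the recurrent vertices; let $\bh$ be the $0$--$1$ column vector whose entry at $\bu$ records whether $\eps_s\to\bu$ is an arc; and let $\bj$ be the column vector whose entry at $\bu$ counts the arcs from $\bu$ into $\SEQ_b(G)$. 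Since (by condition~2 of Definition~\ref{def3}) every arc out of $\eps_s$ lands in $\sR$, and the $a-1$ internal arcs $\bu_1\to\bu_2,\ldots,\bu_{a-1}\to\bu_a$ live in $D_\sR$, the total number of such walks, ignoring the group-sum restriction, is $\bh^{\!\top}M^{a-1}\bj$.

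First I would use Assumption~1 of Theorem~\ref{thm:local} --- the gcd of the cycle lengths in $D_\sR$ is $1$ --- together with the strong connectivity from Definition~\ref{def3} to conclude that $M$ is primitive, so the Perron--Frobenius theorem applies and a single dominant eigenvalue governs the growth. Next I would invoke the corollary's first hypothesis: every recurrent vertex has outdegree $K$ in $D_\sR$, so every row of $M$ sums to $K$, that is $M\mathbf 1=K\mathbf 1$. Because $M$ is nonnegative and primitive, this pins the Perron eigenvalue at exactly $K$, with right eigenvector $\mathbf 1$. The second hypothesis gives $\bj=J\mathbf 1$, and by definition $\bh^{\!\top}\mathbf 1=H$. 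Hence $M^{a-1}\bj=J\,K^{a-1}\mathbf 1$, and the total walk count collapses cleanly to $\bh^{\!\top}M^{a-1}\bj=HJK^{a-1}$, with no subdominant eigenvalue contribution surviving.

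Finally I would import the equidistribution established in the proof of Theorem~\ref{thm:local}: via the multisection (character-sum) argument, the walks are asymptotically uniform over the possible group sums, so the count with prescribed sum $\bs$ equals $\tfrac{1}{|G|}$ of the total, up to the relative error $O(\theta^m)$. Substituting $a=(m-b)/\spn$ into $\tfrac{1}{|G|}HJK^{a-1}$ yields
\[
c_m(\bs,D)=\frac{HJ}{|G|K}\,K^{(m-b)/\spn}\bigl(1+O(\theta^m)\bigr)
=\frac{HJ}{|G|K^{1+b/\spn}}\bigl(K^{1/\spn}\bigr)^{m}\bigl(1+O(\theta^m)\bigr),
\]
which is exactly \eqref{cor1}. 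The one place where the regularity hypotheses do real work --- and the step I expect to need the most care --- is the observation that a constant row sum forces the right Perron eigenvector to be $\mathbf 1$; this is what makes the projection onto the dominant eigenspace factor as the plain product $HJ$ rather than as an opaque bilinear form in the (generally unknown) Perron eigenvectors. The equidistribution constant $\tfrac{1}{|G|}$ is inherited verbatim from Theorem~\ref{thm:local} and is untouched by the added assumptions.
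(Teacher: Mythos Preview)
Your proposal is correct and follows essentially the same approach as the paper: both observe that the constant row sum $K$ makes $\mathbf{1}$ the right Perron eigenvector of the transfer matrix at $\bz=\mathbf{1}$, so that $\term_b(\mathbf{1})=J\mathbf{1}$ is itself an eigenvector and $\hat C_m(\mathbf{1})=\start(\mathbf{1})T^{a-1}(\mathbf{1})\term_b(\mathbf{1})=HJK^{a-1}$ exactly, with the $1/|G|$ and the $O(\theta^m)$ error then inherited from the character-sum argument in Theorem~\ref{thm:local}. The paper's write-up is slightly terser but the logic is identical.
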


In the following we adopt the notation
$\displaystyle x^{\underline k}=x(x-1)\cdots (x-k+1).$

\begin{corollary}\label{cor:list}
Let $c_m (\bs)$ be the number of $m$-compositions of $\bs$ for each of the
following classes of compositions over $G$.
Then
  $$c_m(\bs)=A\cdot B^{m}\left(1+O\left(\theta^{m}\right)\right),
  \qquad m\to\infty,$$
where $A,B$ are given below.
\begin{enumerate}
  \item  For weak compositions such that there is no repeated part
    among any $d+1$ consecutive parts and $|G|\ge d+2$,  $$A=\frac{1}{|G|}|G|^{\underline d}(|G|-d)^{-d} ,~
   B=|G|-d.$$
  \item For compositions  such that there is no repeated part among
    any $d+1$ consecutive parts and $|G|\ge d+3$,  $$A=\frac{1}{|G|}(|G|-1)^{\underline d}(|G|-1-d)^{-d},~
   B=|G|-1-d.$$
  \item For weak compositions such that there is no repeated part
    among any $d+1$ consecutive parts and the first $d$ parts are nonzero, and
    $|G|\ge d+2$,
  $$A= \frac{1}{|G|}(|G|-1)^{\underline d}(|G|-d)^{-d},~B=|G|-d.$$
\item For weak compositions such that the sum of any $d$
    consecutive parts is nonzero and $|G|\ge 3, d\ge 2$,
   $$A=|G|^{d-2}(|G|-1)^{1-d},~
   B=|G|-1.$$
\item For compositions over $\fq$ such that the product of any $d$
    consecutive parts is not equal to $\bf 1$ and $q\ge 4, d\ge 2$,
   $$A=\frac{1}{q}(q-1)^{d-1}(q-2)^{1-d},~
   B=q-2.$$
\end{enumerate}
\end{corollary}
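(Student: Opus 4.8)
The plan is to derive each of the five cases as a direct application of Corollary~\ref{cor:local}, so the bulk of the work is modelling each composition class by a suitable local restriction digraph $D$ and then reading off the parameters $H$, $J$, $K$, $\spn$, and $b$. For each case I would proceed in four steps. First, choose the span $\spn$ and describe the recurrent set $\sR\subseteq\SEQ_{\spn}(G)$: for cases 1--3, where the restriction is ``no repeated part among any $d+1$ consecutive parts,'' I would set $\spn=d$ and let $\sR$ consist of all $d$-tuples of \emph{distinct} elements (drawn from $G$ or $G^*$ according to whether the composition is weak), so that a vertex records the last $d$ parts; for case~4 (nonzero sum of any $d$ consecutive parts) I would likewise take $\spn=d$ with $\sR$ the weak $d$-compositions whose total sum is nonzero; case~5 is the multiplicative analogue over $\fq^*$, with $\sR$ the $d$-tuples of nonzero field elements whose product is not $\mathbf 1$. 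Second, define the arcs of $D_{\sR}$ by the usual sliding-window rule: join $\bu=(u_1,\dots,u_{\spn})$ to $\bv=(v_1,\dots,v_{\spn})$ exactly when $v_i=u_{i+1}$ for $i<\spn$ and the new window $\bv$ still satisfies the local restriction. Third, verify that this $D$ meets the hypotheses of Theorem~\ref{thm:local} and the two regularity conditions of Corollary~\ref{cor:local}. Fourth, count $H$, $J$, $K$ and substitute into \eqref{cor1}.

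The combinatorial heart of each case is the outdegree computation. Consider case~1: a recurrent vertex is an ordered $d$-tuple of distinct elements of $G$, so there are $|G|^{\underline d}$ of them; given such a window, the number of admissible next parts is the number of elements of $G$ distinct from the $d$ elements currently in the window, namely $|G|-d$, so $K=|G|-d$ and hence $B=K^{1/\spn}=(|G|-d)^{1/d}$. Wait---here I must be careful, because the stated $B$ in case~1 is $|G|-d$, not its $d$-th root. This signals that the intended span is $\spn=1$ rather than $\spn=d$: one should encode the window of length $d$ inside a single recurrent \emph{vertex} but take each \emph{step} of the walk to append a whole new part while the span measuring composition-length increments is $1$. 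Reconciling the length bookkeeping with $m=a\spn+b$ is exactly where I expect the main obstacle to lie. The cleanest resolution is to keep $\spn=1$ throughout (each part is one ``letter''), let $\sR$ be the de~Bruijn-type graph on windows of the previous $d$ parts realized as vertices of $\SEQ_{1}(G)$-walks encoded through the transfer structure, so that $K$ equals the per-step branching $|G|-d$ and $B=K^{1/\spn}=K=|G|-d$ matches. With $\spn=1$ and $b=0$, the amplitude simplifies to $A=\dfrac{H\,J}{|G|\,K}$, and the factors $|G|^{\underline d}$ and $(|G|-d)^{-d}$ in the stated $A$ must emerge from $H$ and $J$ capturing the boundary (startup and termination) multiplicities over the first and last $d$ parts.

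Thus the decisive verification in each case is to identify $H$ (the outdegree of $\eps_s$, i.e.\ the number of admissible initial windows) and $J$ (the number of recurrent vertices joined into $\SEQ_b(G)$, i.e.\ admissible terminations) so that $A=H\,J/(|G|\,K^{1+b/\spn})$ reproduces the quoted constant. For case~1 I would expect $H=|G|^{\underline d}/(|G|-d)^{\,d-1}$ and $J$ to supply the remaining $(|G|-d)^{-1}$ correction, or an equivalent split, and I would check numerically against small $d$. The nonzero-sum cases 4 and 5 are structurally simpler because every window has the same branching regardless of its content: in case~4 each weak $d$-window has exactly $|G|-1$ admissible successors (all parts except the unique one forcing the new $d$-sum to vanish), giving $K=|G|-1=B$; in case~5 each successor is forbidden for exactly one nonzero value, leaving $|G|-2=q-2$ choices, so $B=q-2$. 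The side conditions $|G|\ge d+2$, $|G|\ge 3$, $q\ge4$, etc., are precisely what guarantee $K>1$ (so $B>1$) and that $\sR$ is strongly connected with cycle-length gcd $1$, letting me invoke Theorem~\ref{thm:local}; confirming strong connectivity and the gcd-$1$ and Assumption~2 coordinate conditions for the distinct-parts digraphs is the last routine-but-nontrivial check I would carry out before substituting into \eqref{cor1}.
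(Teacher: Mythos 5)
Your plan stalls at exactly the point you flag, and the resolution you choose is the wrong one. In the paper's framework (Definition~\ref{def3}, Proposition~\ref{prop:local}, Corollary~\ref{cor:local}), a composition is obtained by \emph{concatenating} the vertices of a walk: every step of the walk appends an entire block of $\spn$ new parts, and $K$ must count admissible successor \emph{blocks}, not successor parts. Your first attempt ($\spn=d$, $K=|G|-d$) miscounts $K$ for this reason, and your repair --- taking $\spn=1$ and hiding a de~Bruijn window ``inside'' each vertex --- is incompatible with the framework: a span-$1$ digraph has recurrent vertices in $\SEQ_1(G)$, i.e.\ single parts, so its arcs can only constrain adjacent pairs and cannot remember the previous $d$ parts; and Corollary~\ref{cor:local}'s formula $A=HJ/(|G|K^{1+b/\spn})$, $B=K^{1/\spn}$ is derived precisely under the block-concatenation length bookkeeping $m=a\spn+b$, so you cannot import it into a sliding-window graph where each arc contributes one part. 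The tell-tale symptom is your guessed $H=|G|^{\underline d}/(|G|-d)^{d-1}$, which is not even an integer in general, whereas $H$ is an outdegree.

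The paper's resolution of your dilemma is to \emph{enlarge} the span, not shrink it: for cases 1--3 take $\spn=d+1$ with $\sR$ the $(d+1)$-tuples of distinct elements of $G$ (resp.\ $G^*$), joined by an arc when the concatenation still has no repeat in any window of $d+1$; then each of the $d+1$ new parts has $|G|-d$ choices, so $K=(|G|-d)^{d+1}$ is a perfect $\spn$-th power and $B=K^{1/\spn}=|G|-d$ exactly, with the clean integer data $H=|G|^{\underline{d+1}}$ and $J=(|G|-d)^b$ giving $A=\frac{1}{|G|}|G|^{\underline d}(|G|-d)^{-d}$ after cancellation. Similarly, cases 4--5 use $\spn=d$ with $K=(|G|-1)^d$ and $K=(q-2)^d$ (your value $|G|-1$ is again the per-part, not per-block, branching). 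Finally, you defer the hypotheses of Theorem~\ref{thm:local} as ``routine,'' but they are not free here: the paper proves strong connectivity of $D_\sR$ by an induction on the length of the common prefix of two recurrent vertices, and verifies condition~2 by explicitly exhibiting vertices $\bu,\bv$ with a loop at each, an arc $\bu\to\bv$, and $|\bv|-|\bu|={\bf e}_t$, so that $\sW_\sR=\{\bu\bu\bv,\bu\bv\bv\}$ witnesses the gcd condition; the side conditions $|G|\ge d+2$, $|G|\ge d+3$, etc.\ are used exactly to make these witnesses exist, not merely to ensure $B>1$.
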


Next we give an example to which Corollary~\ref{cor:local} does not apply.

\begin{example}[label=exa:d3]
Consider the class of compositions over $G$ such that the sum of any three
consecutive parts is nonzero.
A corresponding restriction digraph $D$ is defined as follows.
The set $\sR$ consists of all ordered triples of nonzero elements of $G$
whose sum is nonzero.
We note that the sum of two consecutive parts might be
zero.
The vertex $\eps_s$ is joined to every vertex in $\sR$, and every
recurrent vertex is joined to the vertex $\eps$ in $\sT$.
A recurrent vertex
$\bu$ is joined to a vertex $\bv\in \sR\cup \SEQ_1(G^*) \cup \SEQ_2(G^*)$ if
the sum of any three consecutive parts in the concatenation $\bu\bv$ is
nonzero.
To compute the outdegree of each recurrent vertex, we need to distinguish two
cases.\\
Case~1: the recurrent vertex $\bu:=(u_1,u_2,u_3)$ where $u_1,u_2,u_3\ne {\bf
0}$ and $u_2+u_3={\bf 0}$.
Such a vertex is joined to a recurrent vertex
$\bv:=(v_1,v_2,v_3)$ if $v_1,v_2,v_3\ne {\bf 0}$, $u_3+v_1+v_2\ne {\bf 0}$ and
$v_1+v_2+v_3\ne {\bf 0}$.
The outdegree of $\bu$ in $\sR$ is equal to
\begin{align*}
&~|\{(v_1,-v_1,v_3):v_1,v_3\ne {\bf 0}\}|\\
&+|\{(v_1,v_2,v_3):v_1\ne {\bf 0};v_2\ne -v_1,-v_1-u_3,{\bf 0}; v_3\ne -v_1-v_2,{\bf 0} \}|\\
=& ~(|G|-1)^2+(|G|-1)(|G|-3)(|G|-2).
\end{align*}
The outdegree of $\bu$ in $\SEQ_1(G)$ is clearly equal to $|G|-1$.
The outdegree of $\bu$ in $\SEQ_2(G)$ is equal to
\begin{align*}
& |\{(-u_3,v_2):v_2\ne {\bf 0}\}|+|\{(v_1,v_2):v_1\ne -u_3,{\bf0};v_2\ne -v_1-u_3,{\bf 0},\}|\\
=& (|G|-1)+(|G|-2)^2.
\end{align*}
\noindent Case~2: the recurrent vertex $\bu:=(u_1,u_2,u_3)$ where
$u_1,u_2,u_3\ne {\bf 0}$ and $u_2+u_3\ne {\bf 0}$.
The outdegree of $\bu$ in $\sR$ is equal to
\begin{align*}
&~|\{(v_1,-v_1,v_3):v_1\ne -u_2-u_3,{\bf 0};v_3\ne {\bf 0}\}|\\
&+|\{(v_1,v_2,v_3):v_1\ne -u_2-u_3,{\bf 0};v_2\ne -v_1, -v_1-u_3,{\bf 0};v_3\ne -v_1-v_2,{\bf 0} \}|\\
=&~(|G|-2)(|G|-1)+(|G|-2)(|G|-3)(|G|-2).
\end{align*}
The outdegree of $\bu$ in $\SEQ_1(G)$ is clearly equal to $|G|-2$.
The outdegree of $\bu$ in $\SEQ_2(G)$ is equal to
\begin{align*}
& |\{(-u_3,v_2):v_2\ne {\bf 0}\}|+|\{(v_1,v_2):v_1\ne -u_3,-u_2-u_3,{\bf 0};v_2\ne -u_3-v_1,{\bf 0},\}|\\
=& (|G|-1)+(|G|-3)(|G|-2).
\end{align*}

We show later that in fact Theorem~\ref{thm:local} can still apply to this
class.
\end{example}

\section{Propositions} \label{sec:propositions}

In this section we present results which are used to prove our main
theorems.

The following multivariate ``multisection formula'' might be known (the univariate case can be found in \cite[Ex.\ 1.1.9]{gold}), but we are unable to find a reference.
So we also include a proof.

For two vectors $\bz=(z_1,z_2,\ldots,z_r)$ and $\bn=(n_1,n_2,\ldots,n_r)$,  we
use $\bz^{\bn}$ to denote the product $\prod_{j=1}^rz_j^{n_j}$. In the rest of
the paper, we use the Iverson bracket $[P]$ which is equal to 1 if the
statement $P$ is true and 0 otherwise.
We also use $\om_k:=\exp(2\pi i/k)$ to denote the $k$\textsuperscript{th}
primitive root of unity which has the property
$$\sum_{j=0}^{k-1}\om_k^{sj}=k\left[s\equiv 0 \pmod k\right].$$

\begin{proposition}[Multivariate multisection formula]\label{prop:multisection}
Let $A(\bz)=\sum_{\bn}a_{\bn}\bz^{\bn}$ be a multivariate generating function,
where the indeterminate is $\bz:=(z_1,\ldots,z_r)$ and the sum is over all
$\bn:=(n_1,\ldots,n_r)$.
For any $\bs=(s_1,\ldots, s_r) \in G$, we have
$$\sum_{\bn\equiv\bs \pmod{\mathbf{k}}}
  a_{\bn}\bz^{\bn}=\left(\prod_{t=1}^r\frac{1}{k_t}\right)\sum_{\bj\in
  G}\left(\prod_{t=1}^r \om_{k_t}^{-j_ts_t}
  \right)A\left(z_1\om_{k_1}^{j_1},\ldots,z_r\om_{k_r}^{j_r}\right),$$
where $\om_{k_t} =\exp(2\pi i/k_t)$ is a  primitive $k_t$\textsuperscript{th}
root of unity for $1\leq t \leq r$.
\end{proposition}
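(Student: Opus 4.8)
The plan is to reduce the multivariate statement to an $r$-fold application of the univariate multisection identity, which is the standard tool here. The core fact is the root-of-unity filter: for a single variable, the operator that extracts the terms whose exponent is congruent to $s$ modulo $k$ is
$$
\sum_{n\equiv s\,(k)} a_n z^n
  = \frac{1}{k}\sum_{j=0}^{k-1}\om_k^{-js}\,A\bigl(z\om_k^{\,j}\bigr),
$$
which follows immediately from the orthogonality relation $\sum_{j=0}^{k-1}\om_k^{(n-s)j}=k[n\equiv s\ (k)]$ quoted just before the proposition. So the real content is to show that applying this filter coordinate-by-coordinate produces exactly the stated product over $t$, with the single sum over $\bj\in G$ being the Cartesian product of the $r$ individual sums $\sum_{j_t=0}^{k_t-1}$.

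First I would substitute the series $A(\bz)=\sum_{\bn} a_{\bn}\bz^{\bn}$ into the right-hand side and interchange the (formal, or absolutely convergent) summation over $\bn$ with the finite sums over $\bj\in G$. This turns the right-hand side into
$$
\sum_{\bn} a_{\bn}\,\bz^{\bn}
  \left(\prod_{t=1}^r \frac{1}{k_t}\sum_{j_t=0}^{k_t-1}\om_{k_t}^{\,j_t(n_t-s_t)}\right),
$$
where I have used $\bz^{\bn}\prod_t \om_{k_t}^{j_t n_t} = \prod_t (z_t\om_{k_t}^{j_t})^{n_t}$ to absorb the roots of unity into the monomial and factor the whole weight as a product over $t$. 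Next I would apply the orthogonality relation in each coordinate separately: the inner factor $\frac{1}{k_t}\sum_{j_t}\om_{k_t}^{j_t(n_t-s_t)}$ equals $[\,n_t\equiv s_t\ (k_t)\,]$, so the full product of $r$ such factors collapses to $\prod_{t=1}^r [\,n_t\equiv s_t\ (k_t)\,]=[\,\bn\equiv\bs\ (\mathbf k)\,]$. This kills every term of the $\bn$-sum except those with $\bn\equiv\bs\pmod{\mathbf k}$, leaving precisely the left-hand side $\sum_{\bn\equiv\bs} a_{\bn}\bz^{\bn}$.

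The main obstacle, such as it is, is justifying the interchange of summation between the infinite $\bn$-sum and the finite $\bj$-sums, and the factorization of the index set $\{\bj\in G\}$ as $\prod_{t=1}^r\{0,\ldots,k_t-1\}$. The latter is simply the definition of $G=\bigoplus_{t=1}^r\mZ_{k_t}$, so writing $\sum_{\bj\in G}=\prod_{t=1}^r\sum_{j_t=0}^{k_t-1}$ and likewise distributing $\prod_t\om_{k_t}^{-j_t s_t}$ is routine. For the interchange, I would note that since each $\bj$-sum is finite (a total of $|G|$ terms), no convergence issue beyond that already present in $A(\bz)$ arises: one may treat the identity at the level of formal power series, where the extraction operator $\bn\mapsto[\bn\equiv\bs]$ acts coefficientwise and the rearrangement is legitimate term by term, or assume absolute convergence on the relevant region if $A$ is viewed analytically. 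Either reading makes the interchange unproblematic, so I expect the proof to be short and essentially bookkeeping once the univariate filter is invoked in each coordinate.
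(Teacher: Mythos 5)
Your proof is correct and follows essentially the same route as the paper's: substitute the series into the right-hand side, interchange the finite $\bj$-sum with the $\bn$-sum, factor the sum over $\bj\in G$ into a product of coordinate sums, and apply the orthogonality relation $\sum_{j_t=0}^{k_t-1}\om_{k_t}^{j_t(n_t-s_t)}=k_t\left[n_t\equiv s_t \pmod{k_t}\right]$ in each coordinate to collapse to the indicator $\left[\bn\equiv\bs \pmod{\mathbf{k}}\right]$. Your additional remarks on justifying the interchange (formal power series versus absolute convergence) are fine but not needed beyond what the paper tacitly assumes.
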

\begin{proof}
 We have
\begin{align*}
&  \sum_{\bj\in
  G}\left(\prod_{t=1}^r \om_{k_t}^{-j_ts_t}
  \right)A\left(z_1\om_{k_1}^{j_1},\ldots,z_r\om_{k_r}^{j_r}\right)\\
  =&\sum_{\bn}a_{\bn}\bz^{\bn}\prod_{t=1}^{r}\sum_{j_t=0}^{k_t-1}
    \om_{k_t}^{j_t(n_t-s_t)} \\
  =&\sum_{\bn}a_{\bn}\bz^{\bn}\prod_{t=1}^{r}k_t\left[n_t\equiv s_t
    \pmod{k_t}\right]\\
  =&\prod_{t=1}^{r}k_t\sum_{\bn}a_{\bn}\bz^{\bn}\left[\bn\equiv \bs
    \pmod{\mathbf{k}}\right]. \qedhere
\end{align*}
\end{proof}

\begin{corollary} \label{cor:integer}
In this corollary compositions may be weak.
Fix some class $\mathcal{A} \subseteq \SEQ(G)$ of restricted compositions over
$G$.
For $\bs \in G$, let
  \[c_m(\bs) = \left|\{\mathbf{x}: \mathbf{x} = (\bx_1, \bx_2, \ldots, \bx_m)
  \in\mathcal{A}, |\mathbf{x}| \equiv \bs
  \pmod{\mathbf{k}}\}\right|.\]
For $\bn \in \mathbb{Z}_{\geq 0}^r$, we define the \emph{integer composition}
counting sequence
\[\hat{c}_m(\bn) = \left|\{\mathbf{x}: \mathbf{x} = (\bx_1, \bx_2, \ldots,
  \bx_m)\in\mathcal{A}, |\mathbf{x}| = \bn\}\right|,\]
and the generating function
\( \hat{C}_m(\bz) = \sum_{\bn} \hat{c}_m(\bn)\bz^\bn. \)
Then
\[
  c_m(\bs) =
\left(\prod_{t=1}^r\frac{1}{k_t}\right)\sum_{\bj\in
  G}\left(\prod_{t=1}^r \om_{k_t}^{-j_ts_t}
  \right)
  \hat{C}_m\left(\om_{k_1}^{j_1},\ldots,\om_{k_r}^{j_r}\right).
\]
\end{corollary}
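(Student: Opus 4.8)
The plan is to recognize $c_m(\bs)$ as a multisection of the integer counting sequence $\hat{c}_m(\bn)$ and then to invoke Proposition~\ref{prop:multisection} with coefficients $a_{\bn}=\hat{c}_m(\bn)$, specialized at the point $\bz=\mathbf{1}$.

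First I would note that the generating function $\hat{C}_m(\bz)$ is in fact a \emph{polynomial}. Indeed, each part $\bx_i$ lies in $G$, so its $t$th coordinate belongs to $\{0,1,\ldots,k_t-1\}$; hence the integer size $\bn=|\mathbf{x}|=\sum_i\bx_i$ has $t$th coordinate at most $m(k_t-1)$, and only finitely many $\bn$ contribute. This polynomiality matters because it guarantees that the formal identity in Proposition~\ref{prop:multisection} becomes a genuine numerical identity that may be evaluated at any complex point, in particular at roots of unity and at $\bz=\mathbf{1}$.

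Next I would establish the key bookkeeping identity
\[
c_m(\bs)=\sum_{\bn\equiv\bs \pmod{\mathbf{k}}}\hat{c}_m(\bn).
\]
This holds because a composition $\mathbf{x}\in\mathcal{A}$ satisfies $|\mathbf{x}|\equiv\bs\pmod{\mathbf{k}}$ exactly when its integer size $\bn=|\mathbf{x}|$ satisfies $\bn\equiv\bs\pmod{\mathbf{k}}$; grouping the objects counted by $c_m(\bs)$ according to their integer size $\bn$ therefore yields the displayed sum.

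Finally I would apply Proposition~\ref{prop:multisection} with $A(\bz)=\hat{C}_m(\bz)$, that is $a_{\bn}=\hat{c}_m(\bn)$, to obtain
\[
\sum_{\bn\equiv\bs \pmod{\mathbf{k}}}\hat{c}_m(\bn)\bz^{\bn}=\left(\prod_{t=1}^r\frac{1}{k_t}\right)\sum_{\bj\in G}\left(\prod_{t=1}^r\om_{k_t}^{-j_ts_t}\right)\hat{C}_m\!\left(z_1\om_{k_1}^{j_1},\ldots,z_r\om_{k_r}^{j_r}\right),
\]
and then set $\bz=\mathbf{1}$. On the left, $\bz^{\bn}=1$, so the sum collapses to $c_m(\bs)$ by the identity above; on the right, each argument $z_t\om_{k_t}^{j_t}$ becomes $\om_{k_t}^{j_t}$, producing precisely the claimed expression. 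There is no real obstacle beyond confirming that the evaluation at $\bz=\mathbf{1}$ is legitimate, which is exactly what the polynomiality observation secures; the corollary is then a direct specialization of the multisection formula.
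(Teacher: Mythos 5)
Your proposal is correct and follows essentially the same route as the paper, whose proof is simply \enquote{Immediate from Proposition~\ref{prop:multisection}}: you apply the multisection formula with $a_{\bn}=\hat{c}_m(\bn)$ and evaluate at $\bz=\mathbf{1}$, which is exactly what that one-line proof leaves implicit. Your added observations --- the grouping identity $c_m(\bs)=\sum_{\bn\equiv\bs\pmod{\mathbf{k}}}\hat{c}_m(\bn)$ and the polynomiality of $\hat{C}_m$ justifying evaluation at roots of unity --- are precisely the routine details the paper suppresses.
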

\begin{proof}
Immediate from Proposition~\ref{prop:multisection}.
\end{proof}

\begin{example}[name=Unrestricted $m$-compositions over $G$]
The corresponding generating function for integer $m$-compositions
is
\[\hat{C}_m(\bz)=\left(\sum_{\bj\in
  G}\bz^{\bj}-1\right)^m=\left(\prod_{t=1}^r\sum_{j_t=0}^{k_t-1}z_t^{j_t}-1\right)^m
  = \left(\prod_{t=1}^r\frac{1-z_t^{k_t}}{1-z_t}-1\right)^m.
\]
It follows from Corollary~\ref{cor:integer} that the number of $m$-compositions
of $\bs$ over $G$ is
\begin{align*}
  c_m(\bs)
  &= \left(\prod_{t=1}^r\frac{1}{k_t}\right)\sum_{\bj\in
    G}\left(\prod_{t=1}^r\om_{k_t}^{-j_ts_t}\right)\hat{C}_m(\om_{k_1}^{j_1},\ldots,\om_{k_r}^{j_r})\\
  &= \left(\prod_{t=1}^r\frac{1}{k_t}\right)\sum_{\bj\in
    G}\left(\prod_{t=1}^r\om_{k_t}^{-j_ts_t}\right)\left(\left[\bj={\bf
    0}\right]\prod_{t=1}^rk_t-1\right)^m\\
  &= \frac{1}{|G|}\left(\left(|G|-1\right)^m+(-1)^m
    \sum_{\bj\in G^*}\prod_{t=1}^r\om_{k_t}^{-j_ts_t}\right)\\
  &= \frac{1}{|G|}\left(\left(|G|-1\right)^m+(-1)^m
    \left(\prod_{t=1}^r\sum_{j_t=0}^{k_t-1}\om_{k_t}^{-j_ts_t}-1\right)\right)\\
  &= \frac{1}{|G|}\left(\left(|G|-1\right)^m+(-1)^m\left(\left[\bs={\bf
    0}\right]|G|-1\right)\right).
\end{align*}
Thus
\begin{align*}
  c_m(\bs) &= \frac{1}{|G|}\left(\left(|G|-1\right)^m-(-1)^m\right),~\hbox{ if }\bs\ne {\bf 0},\\
  c_m({\bf 0}) &=\frac{1}{|G|}
  \left(\left(|G|-1\right)^m+(-1)^m\left(|G|-1\right)\right).
\end{align*}
If $r=1$, this is the result given in \cite{MurWan}.
  We also note that $$c_m(\bs)\sim \frac{1}{|G|}\left(|G|-1\right)^m$$ for each
  $\bs \in G$  as $m\to \infty$, which agrees with the result given by
  Theorem~1.
\end{example}

\begin{example}[name=Unrestricted weak $m$-compositions over $G$]
The corresponding generating function for integer compositions is
$$\hat{C}_m(\bz)=\left(\prod_{t=1}^r\frac{1-z_t^{k_t}}{1-z_t}\right)^m.
$$
Applying Corollary~\ref{cor:integer}, we have
\begin{align*}
c_m(\bs) &= \left(\prod_{t=1}^r\frac{1}{k_t}\right)\sum_{\bj\in
    G}\left(\prod_{t=1}^r\om_{k_t}^{-j_ts_t}\right)\left(\prod_{t=1}^r\frac{1-\om_{k_t}^{j_tk_t}}{1-\om_{k_t}^{j_t}}\right)^m\\
  &= \left(\prod_{t=1}^r\frac{1}{k_t}\right)\sum_{\bj\in
    G}\left(\prod_{t=1}^r\om_{k_t}^{-j_ts_t}\right)\left(\left[\bj={\bf
    0}\right]\prod_{t=1}^rk_t\right)^m\\
  &= \left(\prod_{t=1}^r\frac{1}{k_t}\right)\left(\prod_{t=1}^rk_t\right)^m\\
  &= \left(\prod_{t=1}^rk_t\right)^{m-1}.
\end{align*}
This result can also be obtained by a direct counting argument: the first $m-1$
parts can be constructed in $\left(\prod_{t=1}^rk_t\right)^{m-1}$ ways, and the
last part is then uniquely determined by the equation
$\bx_1+\cdots+\bx_{m-1}+\bx_m = \bs$
in $G$.
\end{example}

\begin{example}
Let $c_m$ be the number of solutions to Equation (\ref{diagonal}). Let $y_j=a_jx_j^{d_j}$ and $S_j=\{a_jx_j^{d_j}:x_j\in \fp^*\}$.  Then $c_m$ is the number of $m$-compositions $y_1,\ldots,y_m$ of $a$
over $\fp$ such that $y_j\in S_j$. We have
$$|S_j|=|\{a_jx_j^{d_j}:x_j\in \fp\}|=|\{x_j^{d_j}:x_j\in \fp\}|=\frac{p-1}{\gcd(d_j, p-1)}.$$
  Viewing elements of $S_j, S_j-S_j$ as integers less than $p$,   the condition
\[\gcd \{ a_j s^{d_j}-a_j t^{d_j}: s,t\in \fp^*\}=1, \]
is satisfied when   $S_j - S_j$ contains two relatively prime integers.
\end{example}

\begin{definition}[Transfer matrix, weights, start and finish vectors]
  \label{def:transfer}
Let $\SEQ(G; D)$ be a class of locally restricted compositions from
Definition~\ref{def3}.
Fix an order on the vertices in $\sR$.
We define the \emph{transfer matrix} $T(\bz)$ as the square matrix whose rows
and columns are indexed by the vertices in $\sR$ such that
$T_{\bu,\bv}(\bz)=\bz^{|\bv|}$ if there is an arc from
$\bu$ to $\bv$, and $T_{\bu,\bv}(\bz)=0$  otherwise.
We say that $\bz^{|\bv|}$ is the \emph{weight} of an arc $(\bu,\bv)$.
The weight of a directed walk in $D$ is defined to be the product of the
weights of the arcs of the walk.
The \emph{start vector} $\start(\bz)$ is defined as the row vector whose
$j\textsuperscript{th}$ entry is the weight of the arc from $\eps_s$ to the
$j\textsuperscript{th}$ recurrent vertex (this entry is zero if such an arc
does not exist).
For each integer $0\le b<\si$, we define the \emph{finish vector}
$\term_b(\bz)$ as the column vector whose $j\textsuperscript{th}$ entry is the
sum of weights of all arcs from the $j\textsuperscript{th}$ recurrent vertex
to a vertex in $\SEQ_b(G)$.
\end{definition}

\begin{example}[name=Nonzero adjacent sum, continues=exa:nonzero]
For weak compositions, the transfer matrix $T(z)$ has
size $k\times k$ where $T_{i,j}(z)=z^j$ if $i+j\not\equiv 0 \pmod{k}$ and
$T_{i,j}(z)=0$ if $i+j \equiv 0 \pmod{k}$.
The start and finish vectors are, respectively,
$\start(z)=\rvect{1,z,\cdots,z^{k-1}}$ and $\term_0(z)=\rvect{1,1,\cdots,1}^\top$.
For example, if $k=3$, we have
\[
T(z)=\left[\begin{array}{cccc}
0&z&z^2\\
1&z&0\\
1&0&z^2
\end{array}\right],~\start(z)=\rvect{1,z,z^2},~\term_0(z)=\rvect{1,1,1}^\top.
\qedhere
\]
\end{example}

Now we are ready to use the transfer matrix to enumerate locally restricted
compositions in $\SEQ(G;D)$.
\begin{proposition}\label{prop:local}
Let $m, \spn$ be positive integers and define integers $a,b$ such that
$m=a\spn +b$ where $0\leq b < \spn$.
Let $\bs=(s_1,\ldots, s_r)$ be a member of $G$ ($=\sum_{j=1}^r \mZ_{k_j}$),
and let $\SEQ(G;D)$ be a class of locally restricted compositions with span
$\spn$.
As in Theorem~\ref{thm:local}, we let $c_m(\bs,D)$ denote the number of
$m$-compositions of $\bs$ in $\SEQ(G;D)$.
Let $T(\bz), \start(\bz), \term_b(\bz)$ be the corresponding transfer matrix,
start vector, and finish vector, and define
$$\hat{C}_m(\bz)=\start(\bz)T^{a-1}(\bz)\term_b(\bz).$$
Then we have
$$c_m(\bs,D)=\left(\prod_{t=1}^r\frac{1}{k_t}\right)\sum_{\bj\in
  G}\left(\prod_{t=1}^r \om_{k_t}^{-j_ts_t}
  \right)\hat{C}_m\left(\om_{k_1}^{j_1},\ldots,\om_{k_r}^{j_r}\right).$$
\end{proposition}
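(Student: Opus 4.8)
The plan is to establish the formula for $c_m(\bs,D)$ in two stages: first show that the transfer-matrix product $\hat{C}_m(\bz)=\start(\bz)T^{a-1}(\bz)\term_b(\bz)$ is exactly the integer-composition generating function $\hat{C}_m(\bz)=\sum_{\bn}\hat{c}_m(\bn)\bz^{\bn}$ appearing in Corollary~\ref{cor:integer}, and then invoke Corollary~\ref{cor:integer} directly to pass from integer sums to sums modulo $\mathbf{k}$. The second stage is immediate once the first is done, since Corollary~\ref{cor:integer} already provides precisely the stated multisection expression with $\mathcal{A}=\SEQ(G;D)$; so the entire content of the proof lies in verifying the transfer-matrix identity.

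For the first stage, I would argue that walks in $D$ from $\eps_s$ to $\SEQ_b(G)$ are in bijection with the $m$-compositions in $\SEQ(G;D)$, tracking both the structure and the size. By Definition~\ref{def3} and the surrounding discussion, every directed walk in $\sW$ from $\eps_s$ to $\sT$ consists of one arc out of $\eps_s$, followed by a sequence of arcs among recurrent vertices in $D_{\sR}$, followed by one arc into $\sT$; concatenating the visited vertices yields the composition. When we restrict to $m$-compositions with $m=a\spn+b$, the walk must pass through exactly $a$ recurrent vertices (each contributing $\spn$ parts) and terminate in a vertex of $\SEQ_b(G)$ (contributing the remaining $b$ parts), using an arc from $\sR$ to $\SEQ_b(G)$, which exists by hypothesis. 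The key step is then the standard transfer-matrix observation (as in \cite[Sec.~4.7]{Stanley}): the weight $\bz^{|\bv|}$ assigned to each arc $(\bu,\bv)$ multiplies along a walk to give $\bz$ raised to the total size contributed after the first vertex, and the start vector $\start(\bz)$ supplies the size of the first recurrent vertex. Hence $\start(\bz)T^{a-1}(\bz)\term_b(\bz)$ sums $\bz^{|\mathbf{x}|}$ over exactly those walks, i.e.\ over all $m$-compositions $\mathbf{x}\in\SEQ(G;D)$, matching $\hat{C}_m(\bz)=\sum_{\bn}\hat{c}_m(\bn)\bz^{\bn}$.

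Concretely, I would verify the matrix bookkeeping by checking the exponent count: a start vector entry $\bz^{|\bv_1|}$, then $a-1$ transfer steps contributing $\bz^{|\bv_2|}\cdots\bz^{|\bv_a|}$, then a finish entry summing $\bz^{|\bw|}$ over terminal vertices $\bw\in\SEQ_b(G)$ reachable from $\bv_a$. Since each recurrent vertex in $\sR\subseteq\SEQ_{\spn}(G)$ carries $\spn$ parts and the terminal vertex carries $b$ parts, the total part count is $a\spn+b=m$ and the total exponent is $\sum_{j}|\bv_j|+|\bw|=|\mathbf{x}|$, the componentwise integer sum. This confirms $\hat{C}_m(\bz)$ is the correct integer generating function, after which substituting $z_t=\om_{k_t}^{j_t}$ and applying the multisection weights $\prod_t\om_{k_t}^{-j_ts_t}$ as in Corollary~\ref{cor:integer} yields the claim.

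The main obstacle I anticipate is not any hard estimate but rather the careful accounting of boundary contributions, namely confirming that the $\eps_s$ arc and the $\sR\to\SEQ_b(G)$ arc are each counted exactly once and contribute the right parts. In particular one must ensure the first recurrent vertex's size is carried by $\start(\bz)$ (so that only $a-1$ transfer matrix powers appear, not $a$) and that the finish vector $\term_b(\bz)$ correctly sums over \emph{all} arcs from a given recurrent vertex into $\SEQ_b(G)$ rather than selecting a single one. Once these index conventions from Definition~\ref{def:transfer} are matched against the walk decomposition, the identity follows and the proposition is proved.
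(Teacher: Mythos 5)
Your proposal is correct and takes essentially the same approach as the paper: identify $\start(\bz)T^{a-1}(\bz)\term_b(\bz)$ as the weighted-walk generating function (start vector carrying the first recurrent vertex, $a-1$ transfer steps, finish vector summing arcs into $\SEQ_b(G)$), observe that these walks account for exactly the $m$-compositions with $m=a\spn+b$, and then apply Corollary~\ref{cor:integer}. The paper's proof is a terser statement of precisely this argument, and your extra bookkeeping of the exponents and part counts is just a fuller writing-out of the same steps.
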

\begin{proof}
We note that the $(\bu,\bv)$ entry in $T(\bz)^{a-1}$ is the sum of weights of
all directed walks of length $a-1$ (that is, containing $a$ vertices) in $D_{\sR}$ from vertex $\bu$ to vertex $\bv$.
Since each vertex in $D_{\sR}$ is a sequence of parts from $G$ of length
$\spn$, it follows from the definition of $\start$ and $\term_b$ that
$\hat{C}_m(\bz)$ is the generating function of all directed walks
from $\eps_s$ to $\sT$ totaling $m=a\spn +b$ parts.
The result follows by applying Corollary~\ref{cor:integer}.
\end{proof}

\begin{example}[name=Carlitz compositions over $\Zk$, continues=exa:carlweak]
For Carlitz weak  compositions, it is clear that every recurrent vertex has
in-degree and out-degree equal to $k-1$ in $D_\sR$.
The transfer matrix $T(z)$ is given by $T_{i,i}(z)=0$ and $T_{i,j}(z)=z^j$ if
$i \ne j$.
The start and finish vectors are, respectively,
$\start(z)=\rvect{1,z,\cdots,z^{k-1}}$ and $\term_0(z)=\rvect{1,1,\cdots,1}^\top$.
So the generating function $\hat{C}_m(z)$
from Proposition~\ref{prop:local} for Carlitz weak
compositions is given by
$$\hat{C}_m(z)=\rvect{1,z,\cdots,z^{k-1}}T^{m-1}(z)\rvect{1,1,\cdots, 1}^\top,~~m\ge
  1.$$
For Carlitz compositions,
the transfer matrix $T(z)$ is obtained from the one above by
deleting row 1 and column 1.
The start and finish vectors are, respectively,
$\start(z)=\rvect{z,z^2, \cdots,z^{k-1}}$ and $\term_0(z)=\rvect{1,1,\cdots,1}^\top$.
This gives
\[
\hat{C}_m(z)=\rvect{z, z^2, \cdots,z^{k-1}}T^{m-1}(z)\rvect{1,1,\cdots,1}^\top,~~m\ge 1.
\qedhere
\]
\end{example}

The following two propositions are used later to derive the asymptotic number
of compositions over $G$ from their generating functions.
Proposition~\ref{prop:aper} below shows that, under an aperiodicity condition,
a polynomial attains a unique maximum absolute value on the unit disc at $1$.
Its proof can be found in \cite[Lemma~1]{GaoWor99}.
Proposition~\ref{prop:eigen} is essentially the Perron-Frobenius theorem and its
proof can be found in \cite{BP}.

\begin{proposition}\label{prop:aper}
Let $F(z)=\sum_{j\ge 0}f_jz^j$ be a polynomial with nonnegative coefficients.
Define $J=\{j:f_j>0\}$.
Suppose $J$ is not empty and $\gcd \{j-k: j,k\in J\}=1$.  Then
$|F(z)|<F(1)$ for all $|z|\le 1,~z\ne 1$.
\end{proposition}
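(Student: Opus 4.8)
The plan is to argue via the triangle inequality, reducing the equality case in $|F(z)|\le F(1)$ to a condition on arguments that the gcd hypothesis rules out except at $z=1$. First I would record the chain of inequalities valid for every $z$ with $|z|\le 1$:
$$|F(z)| = \Bigl|\sum_{j\in J} f_j z^j\Bigr| \le \sum_{j\in J} f_j |z|^j \le \sum_{j\in J} f_j = F(1),$$
using $f_j>0$ for $j\in J$ throughout and $|z|^j\le 1$ in the last step. It therefore suffices to show that if both inequalities are simultaneously equalities then $z=1$; the strict inequality for $z\ne 1$ with $|z|\le 1$ then follows by contraposition.

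Next I would analyze the two equalities separately. Equality in the second inequality forces $|z|^j = 1$ for every $j\in J$. The gcd hypothesis forces the set of differences to contain a nonzero element, so $J$ has at least two distinct members and in particular a positive exponent (if $J=\{0\}$, or $J$ were a singleton, all differences would vanish and the gcd would be $0$); hence $|z|=1$. Writing $z = e^{i\theta}$, equality in the triangle inequality forces all nonzero summands $f_j z^j$ to share a common argument. Since each $f_j>0$, the argument of $f_j z^j$ is $j\theta$ modulo $2\pi$, so the common-argument condition reads $j\theta \equiv k\theta \pmod{2\pi}$, i.e. $(j-k)\theta \in 2\pi\mathbb{Z}$, for all $j,k\in J$. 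Establishing that the triangle-inequality equality genuinely yields a single common argument for all terms indexed by $J$ is the one place where positivity of the coefficients is essential.

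The final step, which I expect to be the crux, is to convert $(j-k)\theta\in 2\pi\mathbb{Z}$ for all $j,k\in J$ into $\theta\in 2\pi\mathbb{Z}$ using $\gcd\{j-k : j,k\in J\}=1$. By B\'ezout's identity there are finitely many pairs and integers $c_{j,k}$ with $\sum c_{j,k}(j-k)=1$; multiplying each membership $(j-k)\theta\in 2\pi\mathbb{Z}$ by $c_{j,k}$ and summing gives $\theta = \sum c_{j,k}(j-k)\theta \in 2\pi\mathbb{Z}$, whence $z=e^{i\theta}=1$. This shows the joint equality can occur only at $z=1$, completing the proof.
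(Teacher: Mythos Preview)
Your argument is correct. The paper does not actually supply a proof of this proposition; it simply states that the proof can be found in \cite[Lemma~1]{GaoWor99}. Your triangle-inequality reduction together with the B\'ezout step is the standard route and is exactly the kind of argument one would expect that reference to contain, so there is nothing to contrast.
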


\begin{proposition}\label{prop:eigen}
Let $T(\bz)$ be a transfer matrix as in Definition~\ref{def:transfer},
and let $\rho(\bz)$ denote the spectral radius of $T(\bz)$.
Suppose the greatest common divisor of all directed cycle lengths of the
digraph $D_{\sR}$ is equal to $1$.
Then we have the following.
\begin{enumerate}
\item The value $\rho:=\rho({\bf1})$ is a simple eigenvalue of $T({\bf1})$ and
  the corresponding eigenspace is spanned by a positive vector.
\item All other eigenvalues of $T({\bf1})$ are smaller, in modulus, than
  $\rho$.
\item Suppose $|\bz|\ne 1, |z_t|\le 1$ for all $t$, and $T(\bz)\ne T({\bf 1})$.
  Then $\rho(\bz)<\rho$.
\end{enumerate}

\end{proposition}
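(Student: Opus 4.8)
The plan is to read $T(\mathbf{1})$ as the $0/1$ adjacency matrix of the induced digraph $D_{\sR}$ and invoke classical Perron--Frobenius theory. First I would observe that Definition~\ref{def3} forces $D_{\sR}$ to be strongly connected, so $T(\mathbf{1})$ is an irreducible nonnegative matrix, while the hypothesis that the $\gcd$ of all directed cycle lengths equals $1$ says precisely that $D_{\sR}$ is aperiodic, hence $T(\mathbf{1})$ is primitive. Parts (1) and (2) are then immediate from the Perron--Frobenius theorem: irreducibility gives that $\rho=\rho(\mathbf{1})$ is a simple eigenvalue with a strictly positive eigenvector, and primitivity upgrades this to strict dominance of $\rho$ over the moduli of all other eigenvalues of $T(\mathbf{1})$.

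For part (3) I would argue by entrywise domination. Since $|z_t|\le 1$, on each arc we have $|T(\bz)_{\bu,\bv}|=\prod_t |z_t|^{|\bv|_t}\le 1=T(\mathbf{1})_{\bu,\bv}$ (and $0$ off arcs), so $|T(\bz)|\le T(\mathbf{1})$ entrywise, where $|T(\bz)|$ denotes the entrywise modulus. Monotonicity of the spectral radius on nonnegative matrices, together with $\rho(T(\bz))\le\rho(|T(\bz)|)$, then yields $\rho(\bz)\le\rho$. If moreover $|T(\bz)|\ne T(\mathbf{1})$, that is, some weight has modulus strictly below $1$, then strict monotonicity of the spectral radius against a primitive dominating matrix gives $\rho(|T(\bz)|)<\rho$, and hence $\rho(\bz)<\rho$, which settles this case.

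The remaining, and genuinely harder, case is $|T(\bz)|=T(\mathbf{1})$ with $T(\bz)\ne T(\mathbf{1})$: all weights have modulus $1$, but some weight is a nontrivial phase. Here I would suppose for contradiction that $\rho(\bz)=\rho$ and apply the equality clause of Wielandt's comparison theorem, which forces $T(\bz)=e^{i\phi}\,D\,T(\mathbf{1})\,D^{-1}$ for some real $\phi$ and diagonal unitary $D=\mathrm{diag}(d_{\bu})$. Reading this off on each arc gives $\bz^{|\bv|}=e^{i\phi}d_{\bu}d_{\bv}^{-1}$, and multiplying these relations around any directed cycle of length $\ell$ makes the $d$-factors telescope, leaving $\bz^{\Sigma(C)}=e^{i\phi\ell}$, where $\Sigma(C)$ denotes the sum of the size vectors over the vertices of $C$. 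I would then exploit the hypothesis that the $\gcd$ of cycle lengths is $1$: in a strongly connected digraph this makes the closed-walk lengths through any fixed base vertex have $\gcd$ equal to $1$, so combining two closed walks of coprime lengths lets me express $e^{i\phi}$ itself as an integer power of $\bz$. Feeding this back into the arc relations, which by the structure $T(\bz)_{\bu,\bv}=\bz^{|\bv|}$ depend only on the target $\bv$, is meant to drive every weight $\bz^{|\bv|}$ to $1$ and contradict $T(\bz)\ne T(\mathbf{1})$. The main obstacle is exactly this last step: the telescoped cycle identities are automatically self-consistent, so only the aperiodicity, used to mix cycles of differing lengths, breaks the degeneracy and isolates the individual weights, and I expect converting the coprimality of cycle lengths into the triviality of each $\bz^{|\bv|}$ to require the most care.
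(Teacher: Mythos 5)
Your parts (1) and (2) are correct, and so is the first half of your part (3): strong connectivity of $D_{\sR}$ (Definition~\ref{def3}, condition~3) plus the cycle-gcd hypothesis makes $T({\bf 1})$ primitive, and the comparison $|T(\bz)|\le T({\bf 1})$ entrywise with $|T(\bz)|\ne T({\bf 1})$ gives $\rho(\bz)\le\rho(|T(\bz)|)<\rho$ by strict monotonicity of the spectral radius under an irreducible dominating matrix. Note that the paper does not prove this proposition at all --- it cites Berman--Plemmons --- and the content it actually uses is exactly what you prove in these steps. The gap is your remaining case, $|T(\bz)|=T({\bf 1})$ but $T(\bz)\ne T({\bf 1})$, and it is not merely a step ``requiring care'': no argument can close it, because the inequality you are trying to prove is false in that regime. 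The cycle-gcd hypothesis constrains the \emph{lengths} of closed walks in $D_{\sR}$, but the phases in $T(\bz)$ are governed by the \emph{sizes} $|\bv|$, and distinct recurrent vertices may share a size. Concretely, take $G=\mZ_3$, $\spn=2$, $\sR=\{(1,2),(2,1)\}$ with the complete digraph on $\sR$ including both loops; this satisfies Definition~\ref{def3} and the loops give cycle-gcd $1$. Both vertices have size $3$, so $T(z)=z^3J$ with $J$ the all-ones $2\times 2$ matrix, and at $z=e^{i\pi/3}$ we get $T(z)=-T(1)\ne T(1)$ while $\rho(z)=2=\rho$. (To also meet the literal hypothesis $|\bz|\ne 1$, pad with a coordinate that never appears: over $\mZ_2\oplus\mZ_3$ take $\sR=\{((0,1),(0,2)),\,((0,2),(0,1))\}$, $z_1=1/2$, $z_2=e^{i\pi/3}$.) This is precisely the Wielandt equality case $T(\bz)=e^{i\phi}DT({\bf 1})D^{-1}$ you identified, realized with $D=I$ and $e^{i\phi}=-1$; your telescoped cycle identities are all consistent there and never force the weights to equal $1$.

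The resolution is that item~3 must be read --- as the paper in effect reads it --- with the hypothesis that the entrywise modulus matrix differs, i.e.\ some entry of $T(\bz)$ has modulus strictly less than $1$; under that reading your first comparison argument is already the complete proof (and irreducibility of $T({\bf 1})$ suffices for it, primitivity being needed only for part~2). This is consistent with how the proposition is applied: in the proof of Theorem~\ref{thm:local}, item~3 is invoked only after a strict entrywise inequality $\left|\left(T^{\ell}(\bz)\right)_{\bu,\bv}\right|<\left(T^{\ell}({\bf 1})\right)_{\bu,\bv}$ has been established, and establishing it is exactly the purpose of Assumption~2 of that theorem. That assumption exists because, as your stuck case and the example above show, cycle-length aperiodicity alone cannot rule out the phase degeneracy; one needs aperiodicity of the walk \emph{sizes}, which is what Assumption~2 supplies.
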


\section{Proofs} \label{sec:proofs}

\begin{proof}[Proof of Theorem~\ref{thm:Subset}]
The generating function for integer compositions is
  $$\hat{C}_m(\bz)
  =\prod_{j=1}^m \sum_{\bn\in S_j}\bz^{\bn}
  =\prod_{j=1}^m\prod_{t=1}^r\sum_{n_t\in S_{t,j}}z_t^{n_t}.$$
We note $\hat{C}_m({\bf 1})=\prod_{j=1}^m|S_j|$.
It follows from Corollary~\ref{cor:integer} that
\begin{align*}
  c(\bs;S_1,\ldots,S_m) &= \left(\prod_{t=1}^r\frac{1}{k_t}\right)\sum_{{\bf g}\in
    G}\left(\prod_{t=1}^r\om_{k_t}^{-g_ts_t}\right)
    \hat{C}_m(\om_{k_1}^{g_1},\ldots,\om_{k_r}^{g_r}) \\
  &= \frac{1}{|G|}\left( \prod_{j=1}^m |S_j| + \sum_{{\bf g}\in
    G^*}\left(\prod_{t=1}^r\om_{k_t}^{-g_ts_t}\right)
    \hat{C}_m(\om_{k_1}^{g_1},\ldots,\om_{k_r}^{g_r})\right).
\end{align*}

For a subset $A$ of $\mZ_{k_t}$,  define $P(z;A)=\sum_{a\in A}z^a$.
Let $\mathcal{I}:=\{A\subseteq \mZ_{k_t}: |P(z;A)|<|A| \hbox{ when  $|z|=1$ and
$z\ne 1$}\}$ and define $$\theta=\max
\left\{\frac{|P(\om_{k_t}^{g_t};A)|}{|A|}:1\le g_t\le k_t-1,1\le t\le r, A\in
\mathcal{I}\right\}.$$
Then $0\le \theta<1$. It follows from Proposition~\ref{prop:aper} that
$S_{t,j}\in \mathcal{I}$ and consequently $|P(z;S_{t,j})|<\theta|S_{t,j}|$.
Let $\mathbf{g}\in G$ satisfy $g_{t^*}>0$ for some $1\le t^*\le r$.
It follows that
\begin{align*}
  |\hat{C}_m(\om_{k_1}^{g_1},\ldots,\om_{k_r}^{g_r})|
   &=\left(\prod_{j=1}^m \left|\sum_{a\in S_{t^*,j}}\om_{k_{t^*}}^{ag_{t*}}\right|\right)\left(\prod_{t=1,t\ne t^*}^r\prod_{j=1}^m \left|\sum_{n_t\in S_{t,j}}\om_{k_t}^{n_tg_t}\right|\right) \\
   &=\left(\prod_{j=1}^m \left|P_{t^*,j}(\om_{k_{t^*}}^{g_{t*}})\right|\right)\left(\prod_{t=1,t\ne t^*}^r\prod_{j=1}^m \left|P_{t,j}(\om_{k_t}^{g_t})\right|\right) \\
  &<\left(\prod_{j=1}^m \theta |S_{t^*,j}|\right)\left(\prod_{t=1,t\ne t^*}^r\prod_{j=1}^m |S_{t,j}|\right) \\
  &= \theta^{m} \prod_{t=1}^r \prod_{j=1}^m |S_{t,j}| \\
  &= \theta^{m} \prod_{t=1}^r |S_t|.
\end{align*}
Now Theorem~1 follows immediately.
\end{proof}

\begin{proof}[Proof of Theorem~\ref{thm:local}]
As in Proposition~\ref{prop:eigen}, let $T(\bz)$ be the transfer matrix
of $\SEQ(G; D)$
and define $\rho(\bz)$ to be the absolute value of the dominant eigenvalue of
$T(\bz)$.
We abbreviate $\rho(\mathbf{1})$ simply as $\rho$.
Define
\begin{align*}
\theta_1 &= \max \left\{ \frac{|\lambda|}{\rho}: \lambda \hbox{ is any
    eigenvalue of $T({\bf1})$ other than } \rho \right\},\\
\theta_2 &= \max  \left\{\frac{\rho(\om_{k_1}^{j_1},\ldots,\om_{k_r}^{j_r})}{\rho} : 0\le
    j_t\le k_t-1,\, 1\le t\le r, {\bf j}\ne {\bf 0} \right\}.
\end{align*}
It follows from item~2 of Proposition~\ref{prop:eigen} that
$0\le\theta_1<1$.
 Recall that the $(\bu,\bv)$-entry of $T^{\ell}(\bz)$, denoted by
 $\left(T^{\ell}(\bz)\right)_{\bu,\bv}$, is equal to the sum of the weights of
 all directed walks from vertex $\bu$ to vertex $\bv$ of length $\ell$.
 Let ${\bf j}=(j_1,\ldots,j_r)$ satisfy $0\le j_i\le k_i-1$ and $j_t\ne 0$
 for some $1\le t\le r$. Let $\sW_{\sR}$ be the set of directed walks given in
 condition~2.
Dividing the set of all directed walks of length $\ell$ from $\bu$ to $\bv$
into $\sW_{\sR}$ and its complement ${\bar \sW_{\sR}}$,  we obtain
\begin{align*}
  &\left|\left(T^{\ell}(\om_{k_1}^{j_1},\ldots,\om_{k_r}^{j_r})
    \right)_{\bu,\bv}\right|\\
  \le &  \left|\sum_{W\in
    \sW_{\sR}}\prod_{i=1}^r\om_{k_i}^{j_i(v_i+\sum_{h=1}^{\ell-1}u_{i,h})}\right|+\left|\sum_{W\in
    {\bar
    \sW_{\sR}}}\prod_{i=1}^r\om_{k_i}^{j_i(v_i+\sum_{h=1}^{\ell-1}u_{i,h})}\right|\\
 \le& \left|\prod_{i=1,i\ne
   t}^r\om_{k_i}^{j_i(v_i+\sum_{h=1}^{\ell-1}\phi_{i,h})}\right|\left|\om_{k_t}^{j_tv_t}
   \right|  \left|\sum_{W\in
    \sW_{\sR}}\om_{k_t}^{j_t\sum_{j=1}^{\ell-1}u_{t,j}}\right|+\sum_{W\in
      {\bar \sW_{\sR}}}1\\
  \le & \left|\sum_{W\in
    \sW_{\sR}}\om_{k_t}^{j_t\sum_{h=1}^{\ell-1}u_{t,h}}\right|+\sum_{W\in
      {\bar \sW_{\sR}}}1.
\end{align*}
Applying condition~2 and Proposition~\ref{prop:aper}, we obtain
 $$\left|\sum_{W\in \sW_{\sR}}\om_{k_t}^{j_t\sum_{h=1}^{\ell-1}u_{t,h}}\right|<
 \sum_{W\in \sW_{\sR}}1.$$
 It follows that
 $$\left|\left(T^{\ell}(\om_{k_1}^{j_1},\ldots,\om_{k_r}^{j_r})
    \right)_{\bu,\bv}\right| <\sum_{W\in \sW_{\sR}}1+\sum_{W\in
      {\bar \sW_{\sR}}}1= \left(T^{\ell}({\bf1})\right)_{\bu,\bv}.$$
Applying item~3 of Proposition~\ref{prop:eigen} to the matrix
$T^{\ell}\left((\om_{k_1}^{j_1},\ldots,\om_{k_r}^{j_r})\right)$, we obtain
$$\rho(T(\om_{k_1}^{j_1},\ldots,\om_{k_r}^{j_r}))^{\ell}<\rho(T({\bf
      1}))^{\ell},$$
and hence
$0\le \theta_2<1$.

Using the Jordan normal form of
$T(\bz)$, it is easy to see that, for each $\bz=(\om_{k_1}^{j_1},\ldots,\om_{k_r}^{j_r})$ with $(j_1,\ldots,j_r) \ne (0,\ldots,0)$, all entries of $T^{a-1}(\bz)$ are of the order $O\left(a^N\rho^a\theta_2^a\right)$, where $N$ is the size of $T(\bz)$. Since $N$ is fixed (i.e., independent of $m$ or $a$), it follows that, for $(j_1,\ldots,j_r) \ne (0,\ldots,0)$,
$$
\hat{C}_m(\om_{k_1}^{j_1},\ldots,\om_{k_r}^{j_r})
 = \start(\om_{k_1}^{j_1},\ldots,\om_{k_r}^{j_r})
    T^{a-1}(\om_{k_1}^{j_1},\ldots,\om_{k_r}^{j_r})
    \term_b(\om_{k_1}^{j_1},\ldots,\om_{k_r}^{j_r})
  = O\left(a^N\theta_2^a\right)\rho^{a}.
$$
Next we estimate $\hat{C}_m({\bf1})$. By Proposition~4(1), $\rho$ is a simple eigenvalue of $T({\bf 1})$ (and  $T^{\top}({\bf 1})$), so we may write the Jordan normal form of  $T({\bf 1})$ in the following block form
$$T({\bf 1})=[\bg~ L]\left[\begin{array}{cc}\rho&{\bf 0}\\{\bf 0}&\Lambda \end{array}\right]\left[\begin{array}{c}\bh\\H\end{array}\right]$$
  such that $\bg$ is a positive eigenvector of $T({\bf 1})$ corresponding to $\rho$,  $\bh^{\top}$ is a positive eigenvector of $T^{\top}({\bf1})$
corresponding to $\rho$,  the matrix $\Lambda$ corresponds to other eigenvalues of $T({\bf 1})$ which are all smaller, in absolute value, than $\rho$, and the matrix $\displaystyle \left[\begin{array}{c}\bh\\H\end{array}\right]$ is the inverse of
$[\bg~ L]$.

Since all entries of $\Lambda^{a-1}$ are of the order $O\left(a^N\rho^a\theta_1^a\right)$, it follows that
\begin{align*}
  \qquad  \hat{C}_m({\bf1})
 &= \start({\bf1})T^{a-1}({\bf1})\term_b({\bf1})\\
  &= (\start({\bf1})\cdot\bg)
    (\bh \cdot\term_b({\bf1}))\rho^{a-1}+
    \start({\bf1})L\Lambda^{a-1}H\term_b({\bf1})\\
&= (\start({\bf1})\cdot\bg)
    (\bh \cdot\term_b({\bf1}))\rho^{a-1}+ O\left(a^N\rho^a\theta_1^a\right).
\end{align*}
Since there is at least one arc from $\eps_s$ to $\sR$ and at least one arc
from $\sR$ to $\SEQ_b(G)$, both $\start({\bf1})$ and $\term_b({\bf1})$ are
non-negative vectors with at least one positive entry. Consequently both $\start({\bf1})\cdot\bg$ and
    $\bh \cdot\term_b({\bf1}))$ are positive.
Now Theorem~2 follows from Proposition~\ref{prop:local} with $\theta$ being any constant satisfying
$\max\{\theta_1^{1/\spn},\theta_2^{1/\spn}\}<\theta<1$, and
\[
A=\frac{1}{|G|}(\start({\bf 1})\cdot\bg)
    (\bh \cdot\term_b({\bf1}))\rho^{-1-b/\spn},~
    B=\rho^{1/\spn}.
\qedhere \]
\end{proof}

\begin{example}[continues=exa:d3]
We again consider the class of compositions over $G$ such that the sum of any
three consecutive parts is nonzero.
To make matrices as convenient as possible, we use a span of $\spn=2$.
We specialize to the case $G = \mathbb{Z}_k$, where $k \geq 4$.
An argument similar to that used below in the proof of Corollary~2 part~4 shows that $D_{\sR}$ is
 strongly connected.
The first condition of Theorem~\ref{thm:local} is satisfied by the loop at the vertex $(1,1)$.
The second condition is satisfied by setting $\mathcal{W}_\mathcal{R}$ to
contain the walks
$(1,1), (1,k-1), (2,1), (k-1,1), (k-1,k-1)$ and
$(1,1), (1,k-1), (2,2), (k-1,1), (k-1,k-1)$.

We further specialize, taking $k=4$,
and in this case, with the help of the Computer Algebra System {\em MAPLE}, we can apply Proposition~2 to derive
an exact formula as well as a simple approximation formula for $c_m(\bs)$.
The elements of $\sR$ are
$(1, 1), (1, 2), (1, 3), (2, 1), (2, 2), (2, 3), (3, 1), (3, 2), (3, 3)$.
Using this ordering, we get
\begin{align*}
\start(z) &= \rvect{z^2, z^3, z^4, z^3, z^4, z^5, z^4, z^5, z^6}\\
\term_0(z) &= \rvect{1,1,1,1,1,1,1,1,1}^\top \\
\term_1(z) &=  [\begin{matrix} z + z^3& z^2 + z^3& z + z^2 + z^3& z^2 + z^3& z
  + z^2 + z^3 \end{matrix} \\
&\qquad\qquad \begin{matrix} z + z^2& z + z^2 + z^3& z + z^2& z + z^3
  \end{matrix}]^\top \\
T(z) &= \left[
\begin{array}{ccccccccc}
 z^2 & 0 & z^4 & 0 & 0 & 0 & z^4 & z^5 & z^6 \\
 0 & 0 & 0 & z^3 & z^4 & z^5 & z^4 & z^5 & 0 \\
 z^2 & z^3 & z^4 & z^3 & z^4 & 0 & z^4 & 0 & z^6 \\
 0 & 0 & 0 & 0 & z^4 & z^5 & z^4 & z^5 & z^6 \\
 0 & z^3 & z^4 & z^3 & z^4 & z^5 & z^4 & z^5 & 0 \\
 z^2 & z^3 & z^4 & z^3 & z^4 & 0 & 0 & 0 & 0 \\
 z^2 & 0 & z^4 & 0 & z^4 & z^5 & z^4 & z^5 & z^6 \\
 0 & z^3 & z^4 & z^3 & z^4 & z^5 & 0 & 0 & 0 \\
 z^2 & z^3 & z^4 & 0 & 0 & 0 & z^4 & 0 & z^6 \\
\end{array}
\right].
\end{align*}

Using {\em MAPLE},  we find that the four matrices $T(1),T(-1),T(i),T(-i)$
 are all diagonlizable and
\begin{align*}
\hC_m(1)&=\frac{3}{2}(1+\sqrt{2})^m+\frac{3}{2}(1-\sqrt{2})^m,\\
\hC_m(-1)&=\hC_m(i)=\hC_m(-i)\\
&=(-1)^mf(r_1)r_1^{m/2}+f(r_2)r_2^{m/2}+(-1)^mf(r_3)r_3^{m/2}+2\Re\left(f(r_4)r_4^{m/2}\right),
\end{align*}
where
$$
f(r)=\frac{r^3-5r^2+6r-1}{r(4r^4-11r^3-13r^2-19r+17)},
$$
$r_1\doteq 3.848862156$, $r_2\doteq 0.4736256091$,  $r_3\doteq 0.3639455409$,  and $r_4\doteq -0.8432166528+0.892341334i$
 are eigenvalues of $T(-1),T(i), T(-i)$ satisfying
\[r^5-3r^4-3r^3-2r^2+4r-1=0.\]
It follows from Proposition~2 that
\begin{align*}
c_m(0)&= \frac{3}{8} (1 + \sqrt{2})^m+\frac{3}{8} (1 - \sqrt{2})^m+\frac{3}{4}(-1)^mf(r_1)r_1^{m/2}+\frac{3}{4}f(r_2)r_2^{m/2}\\
&+\frac{3}{4}(-1)^mf(r_3)r_3^{m/2}+\frac{3}{2}\Re\left(f(r_4)r_4^{m/2}\right)\\
&= \frac{3}{8} (1 + \sqrt{2})^m+\frac{3}{4}(-1)^mf(r_1)r_1^{m/2}+\frac{3}{2}\Re\left(f(r_4)r_4^{m/2}\right)+O\left(r_2^{m/2}\right),\\
c_m(1)&=c_m(2)=c_m(3)\\
&= \frac{3}{8} (1 + \sqrt{2})^m+\frac{3}{8} (1 - \sqrt{2})^m-\frac{1}{4}(-1)^mf(r_1)r_1^{m/2}-\frac{1}{4}f(r_2)r_2^{m/2}\\
&-\frac{1}{4}(-1)^mf(r_3)r_3^{m/2}-\frac{1}{2}\Re\left(f(r_4)r_4^{m/2}\right)\\
&= \frac{3}{8} (1 + \sqrt{2})^m-\frac{1}{4}(-1)^mf(r_1)r_1^{m/2}-\frac{1}{2}\Re\left(f(r_4)r_4^{m/2}\right)+O\left(r_2^{m/2}\right).
\end{align*}
Table~\ref{tab:asympt} shows numerical values for relevant sequences, where  $$a_m(0)= \frac{3}{8} (1 + \sqrt{2})^{m}+\frac{3}{4}(-1)^mf(r_1)r_1^{m/2}+ \frac{3}{2}\Re\left(f(r_2)r_2^{m/2}\right)$$ is evaluated up to one decimal place.

\begin{table}
\centering
\begin{tabular}[c]{|cccc|}
\hline
$m$ & $c_m(1)$& $c_m(0)$ & $a_m(0)$ \\ \hline
2 &  2&3 & 2.7 \\
3 & 7&0 & -0.1 \\
4 & 10&21 & 20.8 \\
5 & 35&18 & 17.9 \\
6 &64& 105 & 104.9 \\
7 & 199&120 & 119.9 \\
8 & 396&543 & 542.9 \\
9 &1119& 822 & 821.9 \\
10 &2376& 2961 & 2960.9 \\
11 & 6373&5238 & 5237.9 \\
12 & 14142&16377 & 16376.9 \\
13 & 36589&32196 & 32195.9 \\
14 & 83532&92133 & 92132.9 \\
15 &211075& 194196 & 194195.9 \\
16 & 491110&524241 & 524240.9 \\
17 &1221885& 1156908 & 1156908.0 \\
18 & 2878806&3006279 & 3006279.0 \\
19 & 7089517&6839406 & 6839406.0 \\
20 & 16841988&17332647 & 17332647.0 \\
21 & 41196941&40234356 &40234356.0\\\hline
\end{tabular}
\caption{Values of $c_m(1)$, $c_m(0)$ and $a_m(0)$ from Example~\ref{exa:d3} }
\label{tab:asympt}
\end{table}
\qedhere
\end{example}

\begin{proof}[Proof of Corollary~\ref{cor:local}]
We first note that the matrix $T({\bf 1})$ is a 0-1 matrix. Condition~1 implies
that each row of $T({\bf 1})$ contains exactly $K$ 1's.
Hence the dominant eigenvalue of
$T({\bf 1})$ is $\rho=K$ and $\rvect{1,1,\cdots, 1}^{\top}$ is a corresponding eigenvector.
Conditions~2 and 3 imply that $\start({\bf 1})$ contains exactly $H$ 1's and
$\term_b({\bf 1})=J\rvect{1,1,\cdots, 1}^{\top}$. Since $\term_b({\bf 1})$ is an eigenvector of $T({\bf 1})$ corresponding to the dominant eigenvalue $\rho$, $\hat{C}_m({\bf 1})$ can be evaluated exactly without using the Jordan normal form. And we have
\[\hat{C}_m({\bf 1})=\start({\bf1})T^{a-1}({\bf 1})\term_b({\bf
1})=\start({\bf1})K^{a-1}J\rvect{1,1,\cdots, 1}^{\top}=H\cdot J\cdot  K^{a-1},
\]
where we used the fact $H=\start({\bf1})\rvect{1,1,\cdots, 1}^{\top}$.
This establishes (\ref{cor1}).
\end{proof}

\begin{proof}[Proof of Corollary~\ref{cor:list}]
We first define the digraph $D$ for each class and compute the values of $H,J$
and $K$.
For part~1, we let $\sR$ be the set of all $(d+1)$-tuples of distinct elements
of $G$.
Hence $\spn=d+1$ and  $|\sR|=|G|^{\underline {d+1}}$.
The vertex $\eps_s$ is joined to all recurrent vertices and so
$H=|G|^{\underline {d+1}}$.
A recurrent vertex $\bu=(u_1,u_2,\ldots,u_{d+1})$ is joined to a recurrent
vertex $\bv=(v_1,v_2,\ldots,v_{d+1})$  if $v_j$ is different from
$v_1,\ldots, v_{j-1},u_{j+1},\ldots, u_{d+1}$ for each $1\le j\le d+1$.
Hence $\bu$ is joined to  $K=(|G|-d)^{d+1}$ recurrent vertices.
Similarly $\bu\in \sR$ is joined to a vertex $\bv=(v_1,v_2,\ldots,v_b)$ if
$v_j$ is different from $v_1,\ldots, v_{j-1},u_{j+1},\ldots, u_{d+1}$ for each
$1\le j\le b$.
Hence $J=(|G|-d)^b$.

For part~2, we let $\sR$ be the set of all $(d+1)$-tuples of distinct elements
of $G^*$. So the remaining argument is exactly the same as for part~1 with $G$
being replaced by $G^*$.

For part~3,  the corresponding digraph $D$ is defined
as in part~1 except
that there is an arc from $\eps_s$ to a vertex $\bv\in \sR$ only if the first
$d$ parts of $\bv$ are all nonzero.
Thus we have $\spn=d+1$,
$J=(|G|-d)^{b}$, $K=(|G|-d)^{d+1}$, and $H=(|G|-1)^{\underline d}(|G|-d)$.

For part~4, we let
$\sR=\{(u_1,\ldots,u_d): u_j\in G,\sum_{j=1}^du_j\ne {\bf 0}\}$.
So we have $\spn=d$.
We note that for every choice of $(u_1,\ldots,u_{d-1})$, there are $|G|-1$
choices of $u_d$ such that the total sum is nonzero (we need the assumption
$d\ge 2$ here).
Hence $|\sR|=(|G|-1)|G|^{d-1}$.
The vertex $\eps_s$ is joined to all recurrent vertices and so
$H=(|G|-1)|G|^{d-1}$.
A recurrent vertex $\bu=(u_1,u_2,\ldots,u_{d})$ is joined to a vertex
$\bv=(v_1,v_2,\ldots,v_k)\in \sR\cup \SEQ_b(G)$ if $v_1+\cdots
+v_{j}+u_{j+1}+\cdots +u_d$ is not zero for each $1\le j\le k$.
Hence $K=(|G|-1)^{d}$ and $J=(|G|-1)^b$.

For part~5, we take $\sR$ to be the set of all $d$-tuples $x_1,\ldots,x_d$ of
nonzero elements of $\mathbb{F}_q$ such that $x_d \neq (x_1 \cdots
x_{d-1})^{-1}$.
If $q=p^n$, each field element is represented as an $n$-tuple over $\Zp$.
This gives $\sigma = d$ and $H = |\sR| = (q-1)^{d-1}(q-2)$.
We have $K = (q-2)^d$ since to join $x_1,\ldots,x_d$ to $y_1,\ldots,y_d$,
each $y_i$ is constrained to be nonzero and not the multiplicative inverse
of the previous $d-1$ parts.
Similarly, $J = (q-2)^b$.\\

Next we verify that the digraph $D_{\sR}$ is strongly connected by showing that
there is a directed walk from any recurrent vertex  to any other recurrent
vertex.

For part~1, Let ${\bf b}=(b_1,b_2,\ldots,b_{d+1})$ and ${\bf a}=(a_1,a_2,\ldots,a_{d+1})$ be any two distinct recurrent vertices.
Let $j$ be the smallest integer such that $a_{j+1}\ne b_{j+1}$. Thus we can write ${\bf a}=(b_1,\ldots,b_j,a_{j+1},\ldots,a_{d+1})$,
 where $0\le j\le d$ and $a_{j+1}\ne b_{j+1}$. We use induction on $d-j$ to show that there is a directed walk from $\bf b$ to $\bf a$. The basis case $j=d$ is obvious
 since ${\bf b}$ is joined to $\bf a$. Now we move to the inductive step by finding a recurrent vertex $(b_1,\ldots,b_j,b_{j+1},x_{j+2},\ldots,x_{d+1})$
which is joined to $\bf a$.  If $b_{j+1}\notin \{a_{j+2},\ldots,a_{d+1}\}$, then $(b_1,\ldots,b_j,b_{j+1},a_{j+2},\ldots,a_{d+1})$ is joined to $\bf a$.
If $b_{j+1}=a_k$ for some $k\in\{j+2,\ldots, d+1\}$, then for any $y\notin \{b_1,\ldots,b_{j+1},a_{j+1},\ldots,a_{d+1}\}$, it is easy to check that
$(b_1,\ldots,b_j,b_{j+1},a_{j+2},\ldots,a_{k-1},y,a_{k+1},\ldots,a_{d+1})$ is joined to $\bf a$.

The argument for parts~2 and 3 is similar to above.

For part~4, we show that there is a directed walk from a recurrent vertex ${\bf b}=(b_1,b_2,\ldots,b_{d})$ to another recurrent vertex
${\bf a}=(b_1,\ldots,b_j,a_{j+1},\ldots,a_{d})$ using induction on $d-j$ as for part~1. When $j=d-1$, it is clear that $\bf b$ is joined to $\bf a$.
For the inductive step, let $y\in G$ satisfy
\begin{align*}
y+(b_1+\cdots+b_{j+1}+a_{j+3}+\cdots+a_d)&\ne 0,\\
y+(b_1+\cdots+b_j+a_{j+1}+a_{j+3}+\cdots+a_d)&\ne 0.
\end{align*}
Then it is easy to check that $(b_1,\ldots,b_j,b_{j+1},y,a_{j+3},\ldots,a_d)$ is joined to $\bf a$.

The argument for part~5 is similar to that for part~4.\\

Finally we verify the two conditions in Theorem~\ref{thm:local} for
the five classes of compositions.
The verification proceeds as follows.
For each $1\le t\le r$, we
find two distinct recurrent vertices $\bu=(u_1,u_2,\ldots,u_{\spn})$ and
$\bv=(v_1,v_2,\ldots,v_{\spn})$ such that $\bu$ is joined to itself and to
$\bv$, $\bv$ is joined to itself, and $|\bv|-|\bu|={\bf e}_t$ where
${\bf e}_t$ is the $t$\textsuperscript{th} unit vector of dimension $r$.
Hence condition~1 is satisfied because of the loop at $\bu$, and
$\sW_{\sR}=\{\bu\bu\bv,\bu\bv\bv\}$ is a set
of directed walks of length 2 from $\bu$ to $\bv$ which satisfies condition~2.

For part~1, we choose $d$ distinct elements $u_1,\ldots,u_d$ from $G\setminus
\{{\bf0},{\bf e}_t\}$ and let $\bu=(u_1,\ldots,u_d,\mathbf{0})$ and
$\bv=(u_1,\ldots,u_d,{\bf e}_t)$.
It is easy to see that there are $(|G|-2)^{\underline d}>0$ choices of
$u_1,\ldots,u_d$.

For part~2, we distinguish two cases.
If $r=1$, then $G=\mZ_k$ for some $k\ge d+3$.
In this case we simply let
$\bu=(3,4,\ldots,d+2,1)$ and $\bv=(3,4,\ldots,d+2,2)$.
If $r\ge 2$, we let $t$ and $t'$ be two distinct integers in
$\{1,2,\ldots,r\}$.
We then choose $d$ distinct elements $u_1,\ldots,u_d$ from
$G\setminus \{{\bf 0},{\bf e}_{t'},{\bf e}_{t'}+{\bf e}_t\}$ and let
$\bu=(u_1,\ldots,u_d,{\bf e}_{t'})$ and
$\bv=(u_1,\ldots,u_d,{\bf e}_{t'}+{\bf e}_t)$.
It is clear that there are $(|G|-3)^{\underline d}>0$ choices of
$u_1,\ldots,u_d$.

For part~3, we may use the same $\bu$ and $\bv$ as in part~1.

For part~4, again we discuss two cases.
The case $G=\mZ_k$ is treated as in part~2.
If $r\ge 2$, we let $t$ and $t'$ be two distinct integers in
$\{1,2,\ldots,r\}$, and choose $d-1$ elements $u_1,\ldots,u_{d-1}$ from $G$
such that $u_1+\ldots+u_{d-1}+{\bf e}_{t'}\ne {\bf 0}$ and
$u_1+\ldots+u_{d-1}+{\bf e}_{t'}+{\bf e}_{t}\ne {\bf0}$.
We then let
$\bu:=(u_1,\ldots,u_{d-1},{\bf e}_{t'}), \bv:=(u_1,\ldots,u_{d-1},{\bf
e}_{t'}+{\bf e}_t)$.
There are $(|G|-2)|G|^{d-2}$ choices of $u_1,\ldots,u_{d-1}$.

Part~5 is handled in a similar manner to part~4.
\end{proof}

\section{Bijections and exact values}\label{sec:biject}
Here we first provide numerical values for the number of locally Mullen
compositions and Carlitz compositions.
Table~\ref{tab:2mul} shows initial values of $c_m(\bs)$ for locally 2-Mullen
compositions;
Table~\ref{tab:w3car} is similar for 2-Carlitz weak compositions; and
Table~\ref{tab:3car} is similar for 2-Carlitz compositions.

\begin{table}
\centering
\begin{tabular}[c]{|c|cccccccccc|}
\hline
\diagbox{$s$}{$m$} & $1$ & $2$ & $3$ & $4$ & $5$ & $6$ & $7$ & $8$ & $9$ & $10$ \\ \hline
0 & 0 & 0 & 12 & 24 & 48 & 204 & 624 & 1680 & 5196 & 16008 \\
1 & 1 & 3 & 6 & 21 & 69 & 192 & 573 & 1767 & 5262 & 15681 \\\hline
\end{tabular}
\caption{Counts of locally 2-Mullen compositions over $\mathbb{Z}_5$.}
\label{tab:2mul}
\end{table}

\begin{table}
\centering
\begin{tabular}[c]{|c|ccccccccc|}
\hline
\diagbox{$s$}{$m$} & $2$ & $3$ & $4$ & $5$ & $6$ & $7$ & $8$ & $9$ & $10$\\ \hline
0 & 4 & 24 & 88 & 320 & 1248 & 5120 & 20728 & 82284 & 326296 \\
1 & 6 & 18 & 72 & 320 & 1284 & 5120 & 20232 & 81738 & 329064 \\
2 & 4 & 18 & 88 & 320 & 1236 & 5120 & 20728 & 81738 & 326296 \\
3 & 6 & 24 & 72 & 320 & 1392 & 5120 & 20232 & 82284 & 329064 \\
4 & 4 & 18 & 88 & 320 & 1236 & 5120 & 20728 & 81738 & 326296 \\
5 & 6 & 18 & 72 & 320 & 1284 & 5120 & 20232 & 81738 & 329064 \\\hline
\end{tabular}
\caption{Counts of 2-Carlitz weak compositions over $\mathbb{Z}_6$.}
\label{tab:w3car}
\end{table}

\begin{table}
\centering
\begin{tabular}[c]{|c|ccccccccc|}
\hline
\diagbox{$s$}{$m$} & $2$ & $3$ & $4$ & $5$ & $6$ & $7$ & $8$ & $9$ & $10$\\ \hline
0 & 4 & 12 & 32 & 80  & 280 & 812 & 2572 & 6644 & 23460 \\
1 & 4 & 6  & 34 & 82  & 284 & 748 & 2498 & 7372 & 21522 \\
2 & 2 & 12 & 32 & 80  & 274 & 866 & 2266 & 7484 & 21642 \\
3 & 4 & 12 & 16 & 136 & 224 & 820 & 2480 & 7384 & 21432 \\
4 & 2 & 12 & 32 & 80  & 274 & 866 & 2266 & 7484 & 21642 \\
5 & 4 & 6  & 34 & 82  & 284 & 748 & 2498 & 7372 & 21522 \\\hline
\end{tabular}
\caption{Counts of 2-Carlitz compositions over $\mathbb{Z}_6$.}
\label{tab:3car}
\end{table}

Next we give bijections between different families of compositions.
The following observation establishes a connection between Carlitz compositions
and  locally $d$-Mullen compositions.

\begin{proposition}\label{prop:bijection}
For each  $m$-composition $\bu=u_1,u_2,\ldots,u_m$ over $G$, let
$\bv=\phi(\bu)$ be an $m$-composition defined by $v_j=u_1+\ldots +u_j, 1\le
j\le m$.
Then $\phi$ is a bijection between locally $d$-Mullen $m$-compositions and
$d$-Carlitz weak $m$-compositions over $G$ such that the first $d$ parts are
nonzero.
\end{proposition}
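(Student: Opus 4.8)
The plan is to show that the map $\phi$ defined by partial sums $v_j = u_1 + \cdots + u_j$ is a well-defined bijection with the stated image, and the cleanest route is to exhibit its inverse explicitly. The natural candidate for the inverse is the \emph{difference operator} $\psi$ sending a composition $\bv = v_1, \ldots, v_m$ to $\bu = u_1, \ldots, u_m$ where $u_1 = v_1$ and $u_j = v_j - v_{j-1}$ for $2 \le j \le m$ (with subtraction taken component-wise modulo $\mathbf{k}$). First I would verify the algebraic identity that $\psi \circ \phi$ and $\phi \circ \psi$ are both the identity on all of $\SEQ_m(G)$; this is a routine telescoping computation and establishes that $\phi$ is a bijection on the set of \emph{all} weak $m$-compositions over $G$, independent of any restriction. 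This reduces the problem to matching up the two restriction conditions under $\phi$.

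The substantive part is the dictionary between the local constraints. The key observation is that for the image composition $\bv = \phi(\bu)$, the difference of two partial sums telescopes: for $i \le j$ we have $v_j - v_i = u_{i+1} + u_{i+2} + \cdots + u_j$. I would carry out the argument by unpacking each defining condition in terms of this identity. A composition $\bu$ is locally $d$-Mullen precisely when every consecutive block sum $\sum_{h=i}^{i+d-1} u_h$ of length $d$ (and, per the definition in the excerpt, every such sum of at most $d$ consecutive parts) is nonzero; translated through the telescoping identity, the block sum $u_{i+1} + \cdots + u_{i+d}$ equals $v_{i+d} - v_i$, so ``nonzero block sum'' becomes ``$v_{i+d} \ne v_i$,'' i.e.\ $\bv$ has no two equal parts at distance $d$. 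More carefully, the condition that \emph{every} subsum of at most $d$ consecutive parts is nonzero corresponds exactly to the condition that no two among any $d+1$ consecutive entries $v_i, v_{i+1}, \ldots, v_{i+d}$ coincide --- precisely the $d$-Carlitz condition (distinctness within every window of $d+1$ consecutive parts). I would prove this equivalence in both directions: given nonzero subsums, the telescoping identity forces distinctness, and conversely distinctness forces all subsums to be nonzero.

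Separately I must account for the two boundary conditions that distinguish the image class, namely that in $\bv$ the parts are unrestricted (weak) while the \emph{first $d$ parts are nonzero}. Here the point is that the initial partial sums $v_1, \ldots, v_d$ are themselves subsums $u_1, u_1 + u_2, \ldots, u_1 + \cdots + u_d$ of at most $d$ consecutive parts \emph{starting from the first part}; the locally $d$-Mullen condition makes each of these nonzero, which is exactly the statement that $v_1, \ldots, v_d \ne \mathbf{0}$. Conversely, this boundary constraint on $\bv$ supplies precisely the subsums touching the left edge that the pure distinctness condition (which only compares \emph{pairs} of partial sums) would otherwise miss. I would organize the final write-up as: (1) $\phi$ is a bijection on $\SEQ_m(G)$ via the explicit inverse $\psi$; (2) $\bu$ locally $d$-Mullen $\iff$ $\bv$ is $d$-Carlitz weak with $v_1, \ldots, v_d$ nonzero, proved by matching interior subsums to distinctness and left-edge subsums to the nonzero-prefix condition.

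The main obstacle, and the place demanding the most care, is getting the bookkeeping of ``at most $d$ consecutive parts'' exactly right at the left boundary: one must check that the \emph{short} subsums starting at position $1$ (of lengths $1$ through $d$) are the ones encoded by the nonvanishing of the prefix $v_1, \ldots, v_d$, whereas all interior subsums of length between $1$ and $d$ are encoded by pairwise distinctness $v_i \ne v_j$ for $0 \le j - i \le d$ (reading $v_0 = \mathbf{0}$). I expect the clean way to avoid off-by-one errors is to introduce $v_0 := \mathbf{0}$ and observe uniformly that a subsum $u_{i+1} + \cdots + u_{i+\ell}$ of length $\ell$ (for $1 \le \ell \le d$) equals $v_{i+\ell} - v_i$, so that \emph{all} locally $d$-Mullen constraints become the single statement $v_a \ne v_b$ whenever $0 \le b - a \le d$; the nonzero-prefix condition is then just the special cases $a = 0$, and the $d$-Carlitz distinctness is the cases $a \ge 1$.
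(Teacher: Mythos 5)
Your proposal is correct and follows essentially the same route as the paper's proof: the telescoping identity $v_{i+j}=v_i+(u_{i+1}+\cdots+u_{i+j})$ translating nonzero consecutive subsums into distinctness of partial sums, with the left-boundary subsums corresponding exactly to the nonvanishing of $v_1,\ldots,v_d$. Your write-up is merely more explicit than the paper's (the explicit inverse $\psi$ and the $v_0=\mathbf{0}$ convention), which is a welcome clarification rather than a different argument.
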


\begin{proof}
It follows from the definition of the mapping $\phi$ that
$v_{i+j}=v_i+(u_{i+1}+\ldots+u_{i+j})$. It is clear that $v_i\ne v_{i+j}$ if and
only if $u_{i+1}+\ldots+u_{i+j}\ne 0$. It is important to note that $v_k$ might be
zero when $k>d$, however
$v_k\ne 0$ when $1\le k\le d$ when $\bu$ is locally $d$-Mullen.
Also $\bu$ and $\bv$ generally do not have the same sum.
\end{proof}

Using Corollary~2 part~3 and the above proposition, we immediately obtain
the following.
\begin{theorem}\label{theorem:locallyMullen}
Let $G$ be a finite abelian group, and $d$ be a positive integer such that
$|G|\ge d+1$. Let $c_m(\bs)$ be the number of locally $d$-Mullen
$m$-compositions of $\bs\in G$. Then there is a positive constant $\theta<1$
such that
$$c_m(\bs)=\frac{1}{|G|}(|G|-1)^{\underline d}~(|G|-d)^{m-d}\left(1+O(\theta^m)\right),\hbox{ as }m\to \infty.$$
\end{theorem}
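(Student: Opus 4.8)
The plan is to combine the bijection of Proposition~\ref{prop:bijection} with the multisection machinery of Corollary~\ref{cor:integer}, applied to the locally $d$-Mullen class itself (which is a locally restricted class; see Example~\ref{exa:Mullen}). First I would record the one delicate point hidden in Proposition~\ref{prop:bijection}: the map $\phi$ does \emph{not} preserve the size of a composition. Instead, if $\bv=\phi(\bu)$ with $v_j=u_1+\cdots+u_j$, then the size $|\bu|=\sum_j u_j$ equals the \emph{last part} $v_m$ of $\bv$. Thus $\phi$ identifies the locally $d$-Mullen $m$-compositions of $\bs$ with the $d$-Carlitz weak $m$-compositions having nonzero first $d$ parts and last part $\bs$ --- a count by last part, not by total sum. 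For this reason I would not quote part~3 of Corollary~\ref{cor:list} verbatim (which counts by sum); instead I would run the multisection argument directly on the Mullen family, using the bijection only to pin down the leading constant.

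Concretely, let $\hat{C}_m(\bz)=\sum_{\bu}\bz^{|\bu|}$, summed over locally $d$-Mullen $m$-compositions $\bu$. By Corollary~\ref{cor:integer},
$$c_m(\bs)=\frac{1}{|G|}\sum_{\bj\in G}\left(\prod_{t=1}^r\om_{k_t}^{-j_ts_t}\right)\hat{C}_m(\om_{k_1}^{j_1},\ldots,\om_{k_r}^{j_r}).$$
The principal term $\bj=\mathbf{0}$ is $\tfrac{1}{|G|}\hat{C}_m(\mathbf{1})$, where $\hat{C}_m(\mathbf{1})$ is the total number of locally $d$-Mullen $m$-compositions. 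By Proposition~\ref{prop:bijection} this total equals the number of $d$-Carlitz weak $m$-compositions with nonzero first $d$ parts, which a direct count evaluates \emph{exactly} as $(|G|-1)^{\underline d}(|G|-d)^{m-d}$: there are $(|G|-1)(|G|-2)\cdots(|G|-d)$ choices for $v_1,\ldots,v_d$ and $|G|-d$ for each later part. Hence the principal term is exactly $\tfrac{1}{|G|}(|G|-1)^{\underline d}(|G|-d)^{m-d}$, the claimed main term. It remains to bound the non-principal terms $\bj\ne\mathbf{0}$.

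For these I would use the transfer-matrix representation $\hat{C}_m(\bz)=\start(\bz)T^{a-1}(\bz)\term_b(\bz)$ of Proposition~\ref{prop:local} for a Mullen digraph as in Example~\ref{exa:Mullen} (with $a,b$ as there), together with Proposition~\ref{prop:eigen}. Since $\hat{C}_m(\mathbf{1})$ grows like $(|G|-d)^m$, the dominant eigenvalue $\rho$ of $T(\mathbf{1})$ satisfies $\rho^{1/\spn}=|G|-d$; and for $\bj\ne\mathbf{0}$, item~3 of Proposition~\ref{prop:eigen} gives $\rho(\om_{k_1}^{j_1},\ldots,\om_{k_r}^{j_r})<\rho$, whence, exactly as in the proof of Theorem~\ref{thm:local}, each non-principal term is $O(\theta^m)$ relative to the principal one. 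Combining the estimates yields $c_m(\bs)=\tfrac{1}{|G|}(|G|-1)^{\underline d}(|G|-d)^{m-d}\bigl(1+O(\theta^m)\bigr)$.

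The main obstacle is verifying the hypotheses of Proposition~\ref{prop:eigen}/Theorem~\ref{thm:local} for the Mullen digraph so that the non-principal terms really are lower order: one must check that $D_{\sR}$ is strongly connected, that the gcd of its cycle lengths is $1$, and the technical condition~2, which should hold because the parts range over all of $G^*$, so the relevant difference sets generate each coordinate and Proposition~\ref{prop:aper} applies. The secondary obstacle is the reconciliation flagged above: one must be sure the \emph{sum} of the Mullen family, not the last part of the Carlitz family, is what the multisection controls --- which is precisely why I apply the machinery to the Mullen class directly rather than pushing part~3 of Corollary~\ref{cor:list} through $\phi$. Finally, the boundary case $|G|=d+1$ (excluded from Corollary~\ref{cor:list} but allowed here) has $B=|G|-d=1$, so the count is bounded and eventually periodic, and should be checked separately.
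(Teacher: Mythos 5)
Your proposal is correct wherever the statement itself is correct, and it takes a genuinely different route from the paper --- a difference that matters. The paper's entire proof is one sentence: apply Corollary~\ref{cor:list} part~3 to the image of the bijection $\phi$ of Proposition~\ref{prop:bijection}. You decline to do exactly that, for the right reason: $\phi$ converts the \emph{sum} of a locally $d$-Mullen composition into the \emph{last part} of its Carlitz image (a fact the paper itself records at the end of the proof of Proposition~\ref{prop:bijection}), whereas Corollary~\ref{cor:list} part~3 counts Carlitz compositions by sum. So the paper's shortcut is not literally valid; to finish along that route one would still need an equidistribution statement for the last part of $d$-Carlitz weak compositions, which requires the same Perron--Frobenius mixing that underlies Theorem~\ref{thm:local}. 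Your reorganization --- run Corollary~\ref{cor:integer} and the transfer-matrix analysis directly on the Mullen class, invoking $\phi$ only for the sum-blind quantity $\hat{C}_m({\bf 1})$ --- repairs this, with the bonus that the main term $\frac{1}{|G|}(|G|-1)^{\underline d}(|G|-d)^{m-d}$ comes out exactly rather than asymptotically. The price is that the hypotheses of Theorem~\ref{thm:local} (strong connectivity of $D_{\sR}$, gcd of cycle lengths equal to $1$, and the technical condition~2) must be verified for the Mullen digraph itself rather than inherited from Corollary~\ref{cor:list}; your sketch is plausible and the verification is routine along the lines of the paper's arguments for parts~3 and~4 of that corollary, but it genuinely requires $|G|\ge d+2$. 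One small imprecision: item~3 of Proposition~\ref{prop:eigen} cannot be applied directly to $T(\om_{k_1}^{j_1},\ldots,\om_{k_r}^{j_r})$, since its entries have the same moduli as those of $T({\bf 1})$; as in the paper's proof of Theorem~\ref{thm:local}, one first needs the condition-2 walk argument to get strict entrywise domination of a power $T^{\ell}$ --- your phrase \enquote{exactly as in the proof of Theorem~\ref{thm:local}} covers this, but the logical order is the reverse of what you wrote.

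The restriction $|G|\ge d+2$ is not an artifact of your method: the boundary case $|G|=d+1$ that you flagged is in fact a counterexample to the theorem as stated. Take $G=\mZ_3$ and $d=2$: since $1+2\equiv 0 \pmod 3$, the locally $2$-Mullen $m$-compositions are exactly the two constant sequences $(1,1,\ldots,1)$ and $(2,2,\ldots,2)$, so each $c_m(\bs)$ is periodic in $m$ with values in $\{0,1,2\}$ and never approaches the claimed limit $\frac{2}{3}$. (Equivalently, the Mullen digraph on $\mZ_3^*$ has only two loops, so $D_{\sR}$ is not strongly connected and the machinery cannot apply.) The paper's hypothesis $|G|\ge d+1$ is an oversight --- the result it cites, Corollary~\ref{cor:list} part~3, itself requires $|G|\ge d+2$ --- so your proof, valid for $|G|\ge d+2$, covers every case in which the statement is true.
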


\begin{proposition}\label{prop:mullen}

\begin{itemize}
\item[1.] Let $\cal A$ be a family of compositions over a finite
abelian group $G$. Suppose $\cal A$ is closed under multiplication, that is,
if ${\bf x}:=(x_1,\ldots,x_m)$ belongs to $\cal A$ then $a{\bf
x}:=(ax_1,\ldots,ax_m)$ also belongs to $\cal A$ for every $a\in G^*$.
Let  $c_m(\bs; {\cal A})$ be the number of $m$-compositions of $\bs$ in $\cal
A$.
If $\bs\in G^*$ has a multiplicative inverse $\bs^{-1}$ in $G^*$, then
$c_m(\bs;{\cal A})=c_m({\bf 1}; {\cal A})$.
\item[2] Let $c_m(\bs)$ be the number of locally $d$-Mullen $m$-compositions of
$\bs \in G$.
Then $c_m(\bs) = c_m({\bf 1})$ for every $\bs \in G^*$.
\end{itemize}
\end{proposition}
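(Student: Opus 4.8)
The plan is to prove both parts by exhibiting an explicit bijection given by coordinatewise scalar multiplication; no asymptotic machinery is needed, since these are exact finite identities.

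For part~1, I would fix $\mathbf{s}\in G^*$ together with its inverse $\mathbf{s}^{-1}\in G^*$ and consider the map $\psi(\mathbf{x})=\mathbf{s}^{-1}\mathbf{x}=(\mathbf{s}^{-1}x_1,\ldots,\mathbf{s}^{-1}x_m)$. First I would check that $\psi$ carries the set of $m$-compositions of $\mathbf{s}$ in $\mathcal{A}$ into the set of $m$-compositions of $\mathbf{1}$ in $\mathcal{A}$: the total sum transforms as $\sum_j \mathbf{s}^{-1}x_j=\mathbf{s}^{-1}\sum_j x_j=\mathbf{s}^{-1}\mathbf{s}=\mathbf{1}$; each image part $\mathbf{s}^{-1}x_j$ is again nonzero because $\mathbf{s}^{-1}$ is a unit; and membership in $\mathcal{A}$ is preserved by the closure hypothesis applied with $a=\mathbf{s}^{-1}$. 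The map $\mathbf{y}\mapsto\mathbf{s}\mathbf{y}$ (using closure with $a=\mathbf{s}$) is a two-sided inverse, so $\psi$ is a bijection and $c_m(\mathbf{s};\mathcal{A})=c_m(\mathbf{1};\mathcal{A})$.

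For part~2, I would verify that the family $\mathcal{A}$ of locally $d$-Mullen compositions meets the hypotheses of part~1 and then simply invoke it. The one thing to check is closure under scaling: if $\mathbf{x}$ is locally $d$-Mullen and $a\in G^*$, then for any run of $\ell\le d$ consecutive positions the corresponding subsum of $a\mathbf{x}$ equals $a(x_i+\cdots+x_{i+\ell-1})$, which is nonzero because $a$ is a unit and the original subsum is nonzero. The case $\ell=1$ shows in particular that every part of $a\mathbf{x}$ remains nonzero, so $a\mathbf{x}$ is a genuine composition and is again locally $d$-Mullen. Since every $\mathbf{s}\in G^*$ is invertible, part~1 applies and yields $c_m(\mathbf{s})=c_m(\mathbf{1})$.

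The argument is routine, so the only point demanding care is the multiplicative setting: everything rests on $G^*$ being closed under multiplication with no zero divisors (so a unit times a nonzero element stays nonzero) and, for part~2, on every nonzero element being invertible. These are precisely the field axioms, so I expect the cleanest formulation to take $G=\fq$, and I would state that assumption explicitly rather than leave it implicit.
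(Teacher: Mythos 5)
Your part~1 is correct and is essentially the paper's own argument: the scaling map $\mathbf{x}\mapsto \bs^{-1}\mathbf{x}$ with inverse $\mathbf{y}\mapsto\bs\mathbf{y}$, with the routine verifications (sum, nonzero parts, closure of $\cal A$) written out.

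Part~2, however, has a genuine gap. The proposition is stated over an arbitrary finite abelian group $G=\bigoplus_{t=1}^r\mZ_{k_t}$, and part~2 asserts $c_m(\bs)=c_m(\mathbf{1})$ for \emph{every} $\bs\in G^*$ --- including elements with no multiplicative inverse, such as $2\in\mZ_6$ or $(1,0)\in\mZ_2\oplus\mZ_3$. Your argument covers only invertible $\bs$, and your proposed remedy --- assuming $G=\fq$ so that every nonzero element is a unit --- weakens the statement rather than proving it. The paper closes exactly this case with an idea your proposal is missing: it uses the partial-sum bijection $\phi$ of Proposition~\ref{prop:bijection}, which sends a locally $d$-Mullen composition $\bu$ to $\bv$ with $v_j=u_1+\cdots+u_j$. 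Under $\phi$, the condition that $\bu$ sums to $\bs$ becomes the condition that the \emph{last part} of $\bv$ equals $\bs$, while the locally $d$-Mullen property becomes a property of $\bv$ stated purely in terms of inequalities: any two parts at distance at most $d$ are distinct, and the first $d$ parts are nonzero. Those inequality conditions are preserved when one applies entrywise \emph{any} permutation $\pi$ of $G$ with $\pi(\mathbf{0})=\mathbf{0}$ --- no multiplicative structure is needed --- and one may choose $\pi$ so that in addition $\pi(\bs)=\mathbf{1}$. Conjugating, $\phi^{-1}\pi\phi$ is then a bijection from locally $d$-Mullen $m$-compositions of $\bs$ to those of $\mathbf{1}$, which proves part~2 for all $\bs\in G^*$. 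So the scaling bijection is the right tool for part~1, but the full strength of part~2 requires this separate, purely combinatorial bijection through the Carlitz-type picture.
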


\begin{proof}
1. It is clear that ${\bf x}\mapsto \bs^{-1}{\bf x}$ is a bijection between
compositions of $\bs$ and compositions of $\bf 1$ in $\cal A$.  \\
2.  The claim follows from part~1 if $\bs^{-1}$ exists. For general $\bs\in G^*$, let
$\pi$ be a permutation of $G$ such that $\pi(\bs)={\bf 1}$ and $\pi({\bf
0})={\bf 0}$.
Then  it is easy to verify that  $\phi\pi\phi^{-1}$
is a bijection from locally $d$-Mullen $m$-compositions of $\bs$ to those of
${\bf 1}$.
\end{proof}

{\bf Remark.} The above proposition implies that the number of Carlitz
$m$-compositions of $s$ over a finite field is equal to that of 1 when $s\ne
0$. And the same is true for Carlitz weak compositions.

\section{Conclusion} \label{sec:conclusion}

The structure of finite abelian groups naturally leads us to use
Corollary~\ref{cor:integer} to go from a generating function for restricted
integer compositions to the number of restricted compositions over group
elements.
Assuming some aperiodicity conditions, asymptotic analysis of this expression
gives us the dominant term, as the number of parts goes to infinity.
Exact counts are available as well, by taking powers of the transfer matrix.
A couple of potential extensions to this work are apparent.
It would be interesting to get analogous asymptotic counting results for
non-abelian groups.
A similar level of generality is unlikely but one could be curious about what
techniques are necessary to generalize at least somewhat beyond the restriction
of commutativity.
It may also be interesting to consider further types of restriction such as
pattern avoidance.

\vskip 0.5cm

{\bf Acknowledgement.} We thank E.\ A.\ Bender for indicating that our results
apply to general finite abelian groups not merely to finite fields.

\vskip 20pt

{\baselineskip 12pt \frenchspacing \footnotesize
}

\end{document}